\documentclass[journal]{IEEEtran}

\usepackage{amsmath,amsfonts}
\usepackage{amssymb}
\usepackage{algorithmic}
\usepackage{algorithm}
\usepackage{array}
\usepackage[caption=false,font=normalsize,labelfont=sf,textfont=sf]{subfig}
\usepackage{textcomp}
\usepackage{stfloats}
\usepackage{url}
\usepackage{verbatim}
\usepackage{graphicx}
\usepackage{cite}
\usepackage{bm}
\usepackage{threeparttable}
\usepackage{color}
\usepackage{amsthm}

\newtheorem{prop}{Proposition}

\begin{document}

\title{An Edge-Based Formulation for the Exact Computation of High-Order Zernike Moments of 2D Shapes and Images}

\author{Patrice~Koehl and Stephan~Tillmann%
\thanks{P. Koehl is with the Department
of Computer Science and Genome Center, University of California,
Davis, CA, 95616.\protect\\
E-mail: koehl@cs.ucdavis.edu}
\thanks {S. Tillmann is with the School of Mathematics and Statistics, The University of Sydney, Sydney, NSW, Australia, 2007.\protect\\
E-mail: stephan.tillmann@sydney.edu.au
}}




\maketitle

\begin{abstract}
Zernike moments are widely used rotation-invariant descriptors for shape and image analysis, but their standard computation relies on a pixel-based quadrature that treats each pixel as a point mass located at its center. 
This approximation introduces spatial aliasing that increases with moment order, degrading image reconstruction and reducing the discriminative power of high-order moments. 
We present an edge-based formulation that eliminates this source of error by applying Green's theorem to transform the two-dimensional area integral defining a Zernike moment into a sum of one-dimensional integrals along image boundaries. 
The resulting framework applies equally to polygonal shapes, binary images, grayscale images, and color images. 
We derive recurrence relations for the required radial primitives and show that the transformed edge integrands are polynomial functions, allowing their exact evaluation using Clenshaw--Curtis quadrature. 
The proposed method computes Zernike moments from polygonal image representations without the spatial discretization errors inherent to conventional pixel-based approaches and remains computationally practical for high-order moments. 
Numerical experiments on image reconstruction, shape analysis, and character classification demonstrate that the proposed formulation matches the accuracy of classical methods at low orders while remaining stable at orders for which pixel-based moments suffer from significant aliasing and numerical degradation.
\end{abstract}

\begin{IEEEkeywords}
2D Zernike moments, Raster images, shape contours.
\end{IEEEkeywords}

\section{Introduction}
\label{sec:intro}

Moment-based shape descriptors have played a central role in image analysis and pattern
recognition for more than six decades.  The idea of using algebraic moments of image
intensity functions was pioneered by Hu~\cite{Hu:1962}, who showed that a small set
of moment invariants under rotation, translation, and scaling encodes powerful geometric
information about a planar shape.  Shortly thereafter, Teague~\cite{Teague:1980}
proposed replacing ordinary algebraic moments with projections onto a complete orthogonal
basis defined over the unit disk---the Zernike polynomials originally introduced by
Frits Zernike in the context of optical aberration theory~\cite{Zernike:1934}.
The resulting \emph{Zernike moments} inherit the classical desirable properties 
of classical moment invariants, including rotation
invariance, information compactness, and robustness to noise, while adding a crucial practical
advantage: the orthogonality of the basis guarantees that individual moments can be computed
and truncated independently, with no information leakage between coefficients.

The Zernike polynomials $V_n^m(r,\theta) = R_n^{|m|}(r)\,e^{im\theta}$ are defined for
non-negative integer order $n$ and integer repetition $m$ satisfying $|m|\le n$ and
$n-|m|\equiv 0\pmod{2}$.  The radial part $R_n^{|m|}(r)$ is a polynomial of degree $n$
in $r$ that vanishes outside the unit disk~\cite{Zernike:1934,Born:1999}:
\begin{equation}
R_n^{|m|}(r) = \sum_{k=0}^{(n-|m|)/2} (-1)^k F(m,n,k) r^{n-2k},
\label{eq:radial_polynomial}
\end{equation}
where
\begin{equation}
F(n,m,k) = \frac{(n-k)!}{k!\left(\frac{n+|m|}{2}-k\right)!\left(\frac{n-|m|}{2}-k\right)!}.
\end{equation}
The Zernike moment of order $(n,m)$ of a function $f$ supported on (a subset of) the unit
disk is
\begin{equation}
  Z_n^m = \sqrt{ \frac{n+1}{\pi}} \iint_{\mathbb{D}} f(r,\theta)\,(V_n^m)^{\ast}(r,\theta)\,r\,\mathrm{d}r\,\mathrm{d}\theta.
  \label{eq:zernike_moment}
\end{equation}

Since Teague's foundational work, Zernike moments have been applied across a broad spectrum of scientific and engineering disciplines. In optical engineering, they remain the standard language for describing wavefront aberrations and for the design and characterization of optical systems~\cite{Born:1999,Mahajan:1994,Thibos:2002}. In pattern recognition and computer vision, they provide rotation-invariant descriptors for object classification~\cite{Khotanzad:1990,Kan:2002,Zhang:2025} and face recognition~\cite{Singh:2011,Soekarta:2025}. In medical imaging, Zernike descriptors have been used to characterize lesions in mammography and Magnetic Resonance Imaging (MRI)~\cite{Mudigonda:2000,Tahmasbi:2011,Bagherian:2023}, as well as anatomical structures in three-dimensional image data~\cite{Novotni:2003,Novotni:2004}. They have also been applied to image and video watermarking~\cite{Kim:2003,Meng:2025,Chen:2023}, to the analysis of fluorescence three-dimensional spectra in chemistry~\cite{Cui:2025}, and to the characterization of molecular surfaces and cell morphologies in computational biology~\cite{Sael:2008,Kihara:2009,Venkatraman:2009,Banach:2024,Boland:2001,Alizadeh:2016}. These examples represent only a small fraction of the current applications of Zernike moments; comprehensive reviews can be found in~\cite{Kaur:2018,Qi:2021,Niu:2022}.

It is worth noting that image analysis has undergone a profound transformation over the last decade with the emergence of machine-learning methods, and in particular deep neural networks \cite{Krizhevsky:2012, LeCun:2015, Jiao:2019, Montero:2021, Castiglioni:2021, Trigka:2025}. 
For many recognition and classification tasks, learned representations now outperform handcrafted descriptors and have become the dominant paradigm. 
Nevertheless, moment-based descriptors such as Zernike moments continue to occupy an important niche \cite{Qi:2021, Niu:2022}. 
Their appeal stems from several unique properties: they provide compact and interpretable representations of shape, they do not require training data, they possess well-understood mathematical invariance properties, and they can readily be incorporated into machine-learning pipelines as engineered features. 
In applications where explainability, computational efficiency, limited training data, or precise geometric characterization are important, Zernike moments remain highly competitive. 
Consequently, improving the accuracy and numerical stability of their computation remains a relevant and timely problem.

The widespread adoption of Zernike moments has motivated considerable effort toward their efficient and accurate numerical computation. 
The standard approach for computing Zernike moments from a digital image treats the image as a collection of point masses located at pixel centers, each weighted by the corresponding pixel intensity. 
This pixel-based method approximates the continuous integral~\eqref{eq:zernike_moment} by a Riemann sum over pixel centers~\cite{Teague:1980,Khotanzad:1990}. 
While simple to implement and computationally efficient, achieving reliable computations across a wide range of moment orders requires addressing three distinct challenges: the evaluation of the radial polynomials, numerical stability at high order, and the accuracy of the discrete integration.

\paragraph{Radial polynomial computation}
The radial polynomials $R_n^{|m|}$ must be evaluated at every pixel center for each
order $(n,m)$. The explicit summation formula expresses $R_n^{|m|}(r)$ as a sum of
terms with alternating signs and binomial coefficients that grow rapidly with $n$ (see Equation~\ref{eq:radial_polynomial}),
leading to severe cancellation between large numbers of opposite sign. This
catastrophic cancellation makes direct evaluation of the explicit formula numerically
unreliable at even moderate orders. Recurrence relations avoid this problem by
propagating polynomial values from lower orders without ever forming the large
intermediate terms. Kintner~\cite{Kintner:1976} and Prata and
Rusch~\cite{Prata:1989} derived such recurrences, which also reduce the
computational cost from $O(n^2)$ to $O(n)$ per point. A systematic comparison of
recurrence strategies and their numerical properties was carried out by Chong et
al.~\cite{Chong:2003}, and a broader survey is given by Lakshminarayanan
and Fleck~\cite{Lakshminarayanan:2011}. 

\paragraph{Discretization error and the point-mass model}
The pixel-based quadrature treats each pixel as a Dirac $\delta$-function, ignoring the
spatial extent of the pixel cell. This is equivalent to a lowest-order quadrature rule.
For low-order moments the error is tolerable, but as the order $(n,m)$ increases the
radial polynomial $R_n^{|m|}$ begins to oscillate on a scale comparable to or smaller
than a single pixel, and the sampling error becomes of the same magnitude as the moment
itself~\cite{Liao:1998a, Liao:1998b}. Liao and Pawlak~\cite{Liao:1998b}
showed theoretically that this discretization error decreases only algebraically with
pixel size. To reduce it, they proposed replacing each
pixel's point-mass contribution by a polynomial approximation of the integrand over the
pixel cell, yielding a higher-order quadrature without abandoning the pixel-based
framework. A visible consequence of uncorrected discretization error in image reconstruction
is the appearance of \emph{ghost pixels}: spurious nonzero values at locations where the
true image is zero, caused by the aliasing of high-frequency basis functions onto the
coarse pixel grid.

\paragraph{Numerical instability at high order}
Even when the radial polynomials are computed via a stable recurrence, an additional difficulty arises at high orders. The point-mass approximation
accumulates floating-point cancellation errors that grow with $n$, and the two sources of
error---recurrence instability and quadrature noise---compound each other.
Hypergeometric representations~\cite{Janssen:2007} and stabilized three-term
recurrences~\cite{Shakibaei:2013} have been proposed to address the polynomial
evaluation side of the problem, but the quadrature error from the point-mass model
remains. The combined effect makes reliable computation of moments of order
$n \gtrsim 50$ very difficult with standard pixel-based methods on typical image
resolutions.

We note that a further limitation of the pixel-based approach is that it is inherently inapplicable
when the input is a closed planar \emph{contour} rather than a raster image. Contour data
arises naturally from edge detection, segmentation, and vector graphics. Representing
the enclosed region as a binary raster image to apply pixel-based methods reintroduces
all the discretization errors described above. A method that integrates directly over
the contour boundary is therefore both more natural and more accurate in this setting.

In this paper we present a fundamentally different approach to the computation of Zernike
moments that eliminates the discretization error at its source.  Instead of approximating
the integral in Equation~\eqref{eq:zernike_moment} by a sum over pixel centers, we evaluate it
\emph{exactly} over each pixel cell, treating every pixel as a small rectangle over which
the intensity is constant and performing the integration analytically.

The key observation is that the integrand in Equation~\eqref{eq:zernike_moment} can be integrated exactly over a rectangle.
Rather than applying this directly---which would be expensive for high-order moments---we
invoke \textbf{Green's theorem} to convert the 2D area integral over each rectangle into
a line integral along its four edges.  This reduction from a surface to a boundary
integral, standard in potential theory and computational geometry, is here applied to
produce exact moment values at a cost that is manageable via carefully designed recurrences.

Specifically, for a single pixel rectangle $[x_i, x_{i+1}] \times [y_j, y_{j+1}]$, the
contribution to $Z_n^m$ reduces to a sum of 1D integrals of Zernike-related polynomials
along horizontal and vertical segments.  These 1D integrals admit recurrences in both the
order $(n,m)$ and the endpoint coordinates, making the overall algorithm efficient in practice.

The same framework applies directly and naturally to \emph{contour data}: if the region of
interest is bounded by a piecewise-linear closed contour, Green's theorem gives an exact
formula for $Z_n^m$ as a sum of line integrals along the contour edges, without any
reference to a pixel grid.  This makes the method equally applicable to vector-format
shape data.


The principal contributions of this work are as follows.

\begin{enumerate}

\item \textbf{Exact contour-based computation of Zernike moments.}
  We derive a formula for the Zernike moments of the region enclosed by a piecewise-linear
  closed contour, expressed as a sum of line integrals along the contour edges.  By
  Green's theorem, this formula is exact: no discretization or quadrature error is
  introduced.  
  
\item \textbf{Extension to exact integration over raster images.}
  We extend our formalism to computing the Zernike moments of a raster image with piecewise-constant pixels. 
The resulting method computes the \emph{exact} Zernike moments of the
  piecewise-constant image---as opposed to the standard approximation that treats pixels
  as point masses---while remaining computationally practical.

\item \textbf{Comprehensive experimental validation on challenging test cases.}
  We demonstrate three concrete advantages of the proposed method over pixel-based
  computation, namely elimination of artifacts during image reconstruction and stability of high-order moments.
  
\end{enumerate}

The remainder of the paper is organized as follows.
Section~\ref{sec:zernike} reviews the properties of Zernike moments, how they are usually computed for a raster image, and then derives the contour-based integration framework that is the main result of this paper. This framework is valid for
shapes represented by contours, as well as for raster images.
Section~\ref{sec:algo} describes in detail the algorithm for implements
our method. It includes a complexity analysis to compare it with current
methods for computing 2D Zernike moments of an image.
Section~\ref{sec:exp} then presents numerical experiments that validate the approach,
and Section~\ref{sec:conclusion} gives concluding remarks.

\section{2D Zernike moments of 2D shapes and raster images}
\label{sec:zernike}

\subsection{Why Zernike moments?}

Zernike moments represent the projection of an image onto Zernike polynomials, which form a complete and orthogonal set over the unit disk $D = \{(r, \theta) \mid r \leq 1\}$.

The complex Zernike basis functions $V_{n}^m(r, \theta)$ elegantly separate radial and angular components:
\begin{equation}
    V_{n}^m(r, \theta) = \sqrt{ \frac{n+1}{\pi} } R_n^m(r) e^{i m \theta},
\end{equation}
where $n$ is a non-negative integer representing the radial order, $m$ is an integer representing the angular frequency (subject to the conditions $|m| \leq n$ and $n - |m|$ being even), and $R_n^m(r)$ is the real-valued radial polynomial (see introduction),
\begin{equation}
R_n^{|m|}(r)
= \sum_{k=0}^{(n-|m|)/2} (-1)^k F(n,m,k) r^{\,n-2k},
\end{equation}
with
\begin{equation}
F(n,m,k) = \frac{(n-k)!}{k!\left(\frac{n+|m|}{2}-k\right)!\left(\frac{n-|m|}{2}-k\right)!}.
\end{equation}

The continuous Zernike moment $Z_n^m$ of a shape defined by an intensity function $f(r,\theta)$ mapped to the unit disk is given by:
\begin{equation}
    Z_n^m = \iint_D f(r,\theta) (V_n^m)^\ast(r, \theta) \, r \, dr \, d\theta,
\end{equation}
where $\ast$ denotes the complex conjugate.

Zernike moments provide crucial advantages for spatial shape analysis:
\begin{itemize}
    \item \textbf{Orthonormality:} The polynomials are strictly orthogonal over the continuous unit disk, ensuring that each moment captures statistically independent features of the image:
    \begin{equation}
        \iint_D V_n^m(r, \theta) (V_p^q)^\ast(r, \theta) \, r \, dr \, d\theta = \delta_{n,p}\delta_{m,q}.
    \end{equation}
    
    \item \textbf{Completeness:} The basis is complete over the unit disk. An image can be perfectly reconstructed with infinite precision as the maximum expansion order $N \to \infty$.
    \begin{equation}
        f(r,\theta) = \sum_{n=0}^{\infty} \sqrt{ \frac{n+1}{\pi} } \sum_{\substack{m=-n \\ n-|m| \text{ even}}}^{n} Z_n^mV_n^m(r,\theta).
    \end{equation}
    \item \textbf{Rotational Invariance:} Because the angular component is defined by a simple complex exponential, rotating an image by an angle $\alpha$ simply shifts the phase of the moment without altering its magnitude. If $Zr_n^m$ is the moment of the rotated image, then:
    \begin{equation}
        Zr_n^m= Z_n^m e^{-i m \alpha} \implies |Zr_n^m| = |Z_n^m|.
    \end{equation}
    This elegant decoupling of rotation into a pure phase shift makes the magnitude $|Z_n^m|$ a powerful, native rotation-invariant descriptor for pattern recognition.
\end{itemize}

\subsection{Pixel-based Zernike moments of a raster image}
\label{subsec:img}

A 2D image can be seen as a rectangular domain discretized by pixels, namely square cells of constant intensity. The Zernike moments $Z_n^m$ are defined by the area integral over this domain:
\begin{equation}
    Z_n^m = \iint_{D} f( \bm{x} ) (V_{n}^m)^{\ast}(\bm{x}) \, \mathrm{d}\bm{x},
    \label{eqn:imD1}
\end{equation}
where $f$ is the intensity of the image. The traditional method to computing those moments relies on a discretization over the pixels of the image. If there are $N_x$ and $N_y$ pixels along the width and height, respectively, of the image, the discrete Zernike moments of the image are computed as:
\begin{equation}
    Z_n^m = \frac{1}{N_xN_y} \sum_{i=1}^{N_x} \sum_{j=1}^{N_y} I(i,j) R_n^m (r_{x,y}) e^{-im\theta_{x,y}},
    \label{eqn:imD2}
\end{equation}
where $I(i,j)$ is the intensity of the pixel at position $(i,j)$, $(x,y)$ are the Cartesian coordinates of the center of that pixel mapped inside the unit circle, and $r_{x,y}$ and $\theta_{x,y}$ the corresponding polar coordinates. Setting the center of the image at $C=(c_x,c_y) = (N_x/2, N_y/2)$ and its enclosing circle with radius $R = \sqrt{ c_x^2 + c_y^2}$, the mapping is given by
\begin{eqnarray*}
x &=& \frac{(i - c_x -0.5)}{R}, \quad y = \frac{(j - c_y -0.5)}{R}, \\
r_{x,y} &=& \sqrt{ x^2 + y^2}, \quad \theta_{x,y} = \tan^{-1} \left( \frac{y}{x} \right).
\end{eqnarray*}
In this approach, each pixel is represented by a single point with area $1/(N_xN_y)$.

While theoretically robust, the traditional discrete computation of Zernike moments on digital images suffers from severe aliasing and geometric discretization errors. Projecting a continuous circular basis onto a rigid, rectangular pixel grid breaks the strict orthogonality condition. This geometric mismatch directly motivates the need for the exact, piecewise-continuous edge-integration techniques detailed in the following sections.

\subsection{Computing the 2D Zernike moments of a 2D shape}
\label{subsec:contour}

Let us consider a homogeneous shape $S$ occupying a 2D domain $D$. The boundary of the domain, $\partial D$, is defined by a set of closed polygonal contours. This includes an outer boundary and potentially multiple internal boundaries representing holes. The boundary is composed of directed linear edges $e = (A,B)$, ordered counter-clockwise for the external contour and clockwise for internal holes. 

Because Zernike polynomials are complete and orthogonal exclusively over the unit disk $\mathbb{D}$, the domain $D$ must be contained within it ($D \subseteq \mathbb{D}$). In practice, this is achieved by translating the shape's centroid to the origin $O$ and uniformly scaling the shape such that the maximum distance from $O$ to any contour vertex is exactly one.

Assuming the shape is represented by a constant scalar field of unit density, its complex Zernike moments $Z_n^m$ are defined by the area integral:
\begin{equation}
    Z_n^m = \iint_{D} (V_{n}^m)^{\ast}(\bm{x}) \, \mathrm{d}\bm{x},
    \label{eqn:omegaD}
\end{equation}
where $n$ is the radial order, $m$ is the angular frequency subject to $|m| \leq n$ with $n - |m|$ being even.

Using the origin $O$ of the coordinate frame as a reference point, the area integral over the arbitrary polygon $D$ can be exactly decomposed into a sum of signed integrals over oriented triangles (see Figure~\ref{fig:triangle}). Each directed edge $e = (A,B)$ defines an oriented triangle $\sigma_{AB} = (O, A, B)$. The global integral is simply the sum of the integrals over these individual triangles:
\begin{equation}
    Z_{n}^m = \sum_{e \in \partial D} \epsilon_{AB} \iint_{\sigma_{AB}} (V_{n}^m)^{\ast}(\bm{x}) \, \mathrm{d}\bm{x}.
    \label{eqn:omegaT}
\end{equation}
Here, the orientation factor $\epsilon_{AB} = \textrm{sign}(A(\sigma_{AB}))$ is determined by the signed area of the triangle, which is computed via the 2D cross product of the vertex vectors:
\begin{equation}
    A(\sigma_{AB}) = \frac{1}{2} \det(\bm{A}, \bm{B}) = \frac{1}{2} (x_A y_B - y_A x_B).
    \label{eq:area}
\end{equation}

\subsubsection{Reduction to 1D Edge Integrals}

Our task is now reduced to evaluating the integral of the Zernike polynomial over a single triangle $\sigma_{AB}$. We perform this integration by mapping the triangle into polar coordinates $(r, \theta)$ centered at the origin (see Figure~\ref{fig:triangle}). 

\begin{figure*}[t]
\centering
\includegraphics[width=0.85\linewidth]{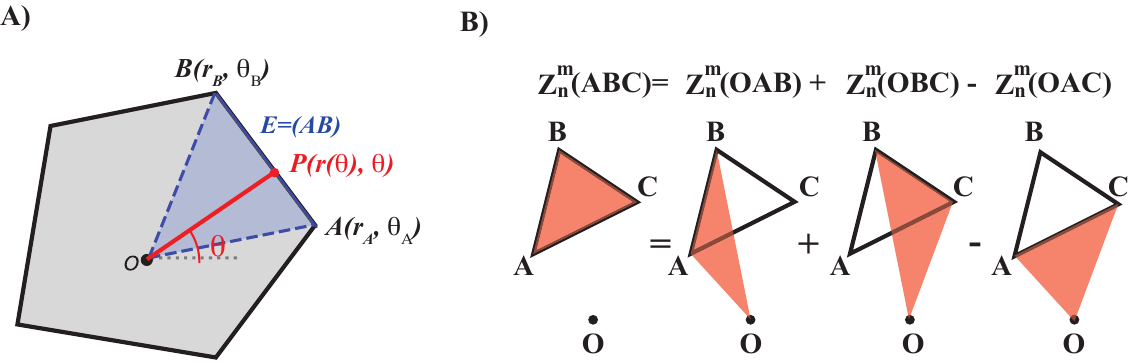}
\caption{\emph{Parameterization of a shape}. (A) A domain D enclosed in a contour defined as a polygonal curve. The Zernike moments of this domain can be computed as the sum of the contribution of all edges along the contour. Let $E=(A,B)$ be such an edge, and $\sigma_{AB}=(O,A,B)$ be the associated triangle formed with the origin $O$. Any ray extending from the origin at angle $\theta$ intersects the edge $E$ at a specific point $\bm{P}$ with polar coordinates $(r(\theta),\theta)$. The Zernike moment can then be computed through separation of variables, with first a radial integration followed by an angular integration along the edge. (B) The decomposition remains true for shapes that do not contain the origin, as long as the edge contributions are signed.}
\label{fig:triangle}
\end{figure*}

Any point $\bm{M}$ with polar coordinates $(r, \theta)$ inside the triangle lies on a ray that intersects the bounding edge $e$ at a boundary point $\bm{P}$. Let the radial distance to this boundary point be defined as the function $r(\theta)$. The 2D area integral over the triangle can then be separated into a sequence of two 1D integrals: first a radial integration from the origin to the edge $r(\theta)$, followed by an angular integration sweeping from $\theta_A$ to $\theta_B$:
\begin{eqnarray*}
    Z_{n}^m(e) &=& \epsilon_{AB} \iint_{\sigma_{AB}} (V_{n}^m)^{\ast}(\bm{x}) \, \mathrm{d}\bm{x} \\
    &=& \epsilon_{AB} \int_{\theta_A}^{\theta_B} \left( \int_{0}^{r(\theta)} R_n^m(r) r \, \mathrm{d}r \right) e^{-im\theta} \, \mathrm{d}\theta.
\end{eqnarray*}

This nested expression defines an efficient two-step analytical process for evaluating the Zernike moment contribution of any edge:
\begin{itemize}
    \item[i)] \textbf{Radial Integration (Anti-derivative):} For the boundary point $\bm{P}$ located at distance $r(\theta)$ from the origin, define the exact radial primitive $Q_n^m$:
    \begin{equation*}
        Q_{n}^m(r(\theta)) = \int_{0}^{r(\theta)} r R_{n}^m(r) \, \mathrm{d}r.
    \end{equation*}
    
    \item[ii)] \textbf{Angular Integration (Line Integral):} Integrate this evaluated primitive along the angular span of the edge $e$:
    \begin{equation*}
        Z_{n}^m(e) = \epsilon_{AB} \int_{\theta_A}^{\theta_B} Q_{n}^m(r(\theta)) e^{-im\theta} \, \mathrm{d}\theta.
    \end{equation*}
\end{itemize}

In the following sections, we derive exact recurrence relationships to efficiently evaluate the radial primitives $Q_n^m(r)$, and describe a robust numerical method to compute the final line integral for $Z_{n}^m(e)$ using Clenshaw-Curtis quadrature.

\subsubsection{Computing the integrals $Q_{n}^m(r)$}

We consider the integrals $S_{n}^m(r)$ and $Q_{n}^m(r)$ of the radial functions $R_{n}^m(r)$:

\begin{eqnarray*}
S_n^m(r) &=& \int_{0}^{r} R_{n}^m(t) \mathrm dt, \\
Q_n^m(r) &=& \int_{0}^{r} t R_{n}^m(t) \mathrm dt.
\end{eqnarray*}

In Appendix \ref{sec:Appendix} we validate the following proposition that allows us to compute $R_n^m(r)$, $S_n^m(r)$, and $Q_n^m(r)$ recursively:
\begin{prop}
For non-negative integers $n\ge 2$ and $m$ with $2 \le m \le n$ and $n-m$ even, the following relationships hold:
\begin{eqnarray}
R_n^m(r) &=& r \left( R_{n-1}^{| m-1 |}(r) + R_{n-1}^{m+1}(r) \right) - R_{n-2}^m(r) , \nonumber \\
\label{eqn:rec1}  \\
S_n^{m-2}(r) &=& \frac{1}{n+1} \left( R_{n+1}^{m-1}(r) - R_{n-1}^{m-1}(r) \right) - S_{n}^m(r) , \nonumber \\
\label{eqn:rec2} \\
Q_{n}^{m-2}(r) &=& S_{n+1}^{m-1}(r) + S_{n-1}^{m-1}(r) - Q_{n}^m(r),\label{eqn:rec3}
\end{eqnarray}
with the additional initializations for $m=n$:
\begin{eqnarray*}
R_{n}^n(r) = r^n, \quad S_n^n(r) = \frac{r^{n+1}}{n+1}, \quad \text{and} \quad Q_n^n(r) = \frac{r^{n+2}}{n+2}.
\end{eqnarray*}
\end{prop}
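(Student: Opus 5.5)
The plan is to derive all three identities from classical facts about the radial polynomials, using the generating-function / Jacobi-polynomial representation $R_n^m(r) = (-1)^{(n-m)/2} r^m P_{(n-m)/2}^{(m,0)}(1-2r^2)$ only where needed. For the initializations at $m=n$, the sum defining $R_n^n$ has the single term $k=0$ with $F(n,n,0)=1$, so $R_n^n=r^n$. The formulas for $S_n^n$ and $Q_n^n$ then follow by direct integration.

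\textbf{Relation \eqref{eqn:rec1}.} This is the Kintner/Prata--Rusch recurrence cited in the introduction. I would prove it via the complex form $V_n^m = R_n^m(r)e^{im\theta}$ and the identity $z\,\overline{\cdot}$ decomposition $r = \tfrac12(z e^{-i\theta}+\bar z e^{i\theta})$ with $z = re^{i\theta}$. More concretely, I would verify it coefficient-wise from the explicit sum. Writing $p=(n-m)/2$, the coefficient of $r^{n-2k}$ on the right side is
\[
(-1)^k\bigl[F(n-1,m-1,k)+F(n-1,m+1,k-1)\bigr]\cdot(\pm) \;-\; (-1)^{k-1}F(n-2,m,k-1),
\]
with indices shifted appropriately. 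This reduces to a factorial identity of Pascal type, which is checked by putting everything over the common denominator $k!\,(\frac{n+m}{2}-k)!\,(\frac{n-m}{2}-k)!$. The case $m-1=0$ vs.\ $|m-1|$ causes no issue because $m\ge 2$. This bookkeeping of index ranges (the boundary terms $k=0$ and $k=p$) is routine but must be done carefully.

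\textbf{Relation \eqref{eqn:rec2}.} Differentiating, the claim is equivalent to the derivative identity
\[
(n+1)\bigl(R_n^{m-2}+R_n^m\bigr) = \frac{d}{dr}\bigl(R_{n+1}^{m-1}-R_{n-1}^{m-1}\bigr),
\]
together with the check that both sides vanish at $r=0$. They do, since $R_{n\pm1}^{m-1}(0)=0$ for $m-1\ge1$. I would prove this derivative identity coefficient-wise as well. In the explicit sum, $r^{n+1-2k}$ in $R_{n+1}^{m-1}$ differentiates to $(n+1-2k)r^{n-2k}$, and $R_{n-1}^{m-1}$ contributes the $k-1$ term. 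Matching against $F(n,m-2,k)+F(n,m,k)$ again gives a factorial identity. This is the classical relation $R_{n+1}^{m-1\prime}-R_{n-1}^{m-1\prime}=(n+1)(R_n^{m-2}+R_n^{m})$, the radial analogue of the Chebyshev identity $T_{n+1}'/(n+1)-T_{n-1}'/(n-1)=2T_n$. I expect this to be the main obstacle, since the factor $(n+1-2k)$ must combine exactly with the two $F$ terms. If the direct algebra is messy, I would fall back on the Jacobi derivative formula $\frac{d}{dx}P_k^{(\alpha,\beta)}=\frac{k+\alpha+\beta+1}{2}P_{k-1}^{(\alpha+1,\beta+1)}$ and contiguous relations.

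\textbf{Relation \eqref{eqn:rec3}.} I would derive this from \eqref{eqn:rec1} without new combinatorics. Apply \eqref{eqn:rec1} at order $n+1$ and $n-1$ with index $m-1$, rearranged as
\[
tR_{n}^{m-2}+tR_{n}^{m} = R_{n+1}^{m-1}+R_{n-1}^{m-1}.
\]
This is \eqref{eqn:rec1} written for $R_{n+1}^{m-1}$: $R_{n+1}^{m-1}=t(R_n^{m-2}+R_n^{m})-R_{n-1}^{m-1}$, valid when $m-1\ge 1$, which may require a remark extending \eqref{eqn:rec1} to index $1$, or using $|m-2|=m-2$ since $m\ge2$. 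Integrating from $0$ to $r$ gives $Q_n^{m-2}+Q_n^m=S_{n+1}^{m-1}+S_{n-1}^{m-1}$, which is \eqref{eqn:rec3}.
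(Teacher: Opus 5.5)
Your proof is correct. For \eqref{eqn:rec3} it is exactly the paper's argument: write \eqref{eqn:rec1} for $R_{n+1}^{m-1}$ and integrate from $0$ to $r$. For \eqref{eqn:rec1} and \eqref{eqn:rec2}, however, your route is genuinely different.

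The paper starts from Noll's representation $R_n^m(r)=(-1)^{(n-m)/2}\int_0^\infty J_{n+1}(x)J_m(rx)\,dx$. It obtains \eqref{eqn:rec1} from the Bessel recurrences for $J_{n-1}\pm J_{n+1}$ by integration by parts, and \eqref{eqn:rec2} by differentiating in $r$ and re-integrating. That is compact and shows where the recurrences come from. It does, however, rest on analytic steps the paper does not justify. The representation holds only for $0\le r<1$ and converges only conditionally. Differentiating in $r$ also produces integrands such as $x\,J_{n+1}(x)J_m'(rx)$ that no longer decay.

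Your coefficient comparison is finite and purely algebraic. It gives polynomial identities for all $r$, and it absorbs every boundary case through the convention $1/(-1)!=0$. It is also easier than you anticipate. Put $a=\frac{n+m}{2}$ and $b=\frac{n-m}{2}$.

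In \eqref{eqn:rec1}, all three contributions to the coefficient of $r^{n-2k}$ carry the sign $(-1)^k$. The middle one is $F(n-1,m+1,k)$, not $F(n-1,m+1,k-1)$ as in your display. Over the common denominator $k!(a-k)!(b-k)!$, their numerators are $(a-k)$, $(b-k)$ and $k$, each times $(a+b-1-k)!$. These sum to $(a+b-k)!$, the numerator of $F(n,m,k)$.

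For the derivative form of \eqref{eqn:rec2}, use the common denominator $k!(a-k)!(b+1-k)!$. The left side gives $(n+1)(n-k)!\,[(a-k)+(b+1-k)]$ and the right side gives $(n+1-2k)(n-k)!\,[(n+1-k)+k]$. Both equal $(n+1)(n+1-2k)(n-k)!$, so the factor $n+1-2k$ appears automatically. Here $m\ge 2$ is exactly what ensures $a-1-k\ge 0$ for all $k\le b+1$.

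Finally, the loose end you flag in \eqref{eqn:rec3}, namely needing \eqref{eqn:rec1} at index $m-1=1$, is closed by your own computation. That computation only uses $a-1-k\ge 0$ for $k\le b$, i.e.\ $m\ge 1$. The paper's appendix likewise proves \eqref{eqn:rec1} for $1\le m\le n-2$.
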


It is noteworthy that $R_n^m(r)$, $S_n^m(r)$, and $Q_n^m(r)$ are polynomials of degree $n$, $n+1$, and $n+2$, respectively.

\subsubsection{Exact evaluation of the angular integral via Quadrature}

Recall that the Zernike moment of an edge $e$ is expressed as an integration along the angular span of the edge $e$:
    \begin{equation}
        Z_{n}^m(e) = \epsilon_{AB} \int_{\theta_A}^{\theta_B} Q_{n}^m(r(\theta)) e^{-im\theta} \, \mathrm{d}\theta.
     \label{eq:znme}
    \end{equation}

To compute the angular integral efficiently and accurately, we transition from the polar angle $\theta$ to a 1D Cartesian parameterization. 

We parameterize the line segment connecting vertices $\bm{A} = (x_A, y_A)$ and $\bm{B} = (x_B, y_B)$ using a standard scalar parameter $t \in [-1, 1]$. Any point $\bm{P}(t)$ on the edge $e$ is defined linearly as:
\begin{equation*}
    \bm{P}(t) = \frac{1 - t}{2} \bm{A} + \frac{1 + t}{2} \bm{B},
\end{equation*}
i.e., 
\begin{eqnarray}
 x(t) &=& \frac{ (1-t) x_A  + (1+t) x_B}{2},  \nonumber \\
 y(t) &=&\frac{ (1-t) y_A  + (1+t) y_B}{2} .
\label{eq:xy}
\end{eqnarray}
Under this Cartesian parameterization, the radial distance is $r(t) = \sqrt{x(t)^2 + y(t)^2}$. The angular differential $\mathrm{d}\theta$ can be derived from the Cartesian coordinates as:
\begin{equation*}
    \mathrm{d}\theta = \frac{x(t)y'(t) - y(t)x'(t)}{r(t)^2} \, \mathrm{d}t.
 \end{equation*}
 Using the fact that $x(t)$ and $y(t)$ are linear functions of $t$ (see Equation~\ref{eq:xy}), we get:
 \begin{eqnarray}
  \mathrm{d}\theta &=&  \frac{x_A y_B - y_A x_B}{2 r(t)^2} \, \mathrm{d}t 
 = \frac{C dt}{r(t)^2},
\end{eqnarray}
where  we have set $C= (x_Ay_B-y_Ax_B)/2$. Substituting these into our 1D edge integral in Equation~\ref{eq:znme} maps the domain from $[\theta_A, \theta_B]$ to the standard interval $[-1, 1]$:
\begin{eqnarray}
    Z_{n}^m(e) &=& \epsilon_{AB} \int_{-1}^{1} Q_{n}^m(r(t)) e^{-im\theta(t)}  \frac{\mathrm{d}\theta}{\mathrm{d}t} \, \mathrm{d}t \nonumber \\
    &=& \epsilon_{AB} \int_{-1}^{1} A_{n}^m(t) \, \mathrm{d}t,
\end{eqnarray}
where
\begin{equation}
A_n^m(t) = C\frac{ Q_{n}^m(r(t)) e^{-im\theta(t)} }{r(t)^2}.
\label{eq:Anm}
\end{equation}
We prove now the following property:
\begin{prop}
The integrand $A_n^m(t)$ is a polynomial function of $t$ of order at most $n$.
\end{prop}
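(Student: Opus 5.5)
The plan is to write $A_n^m(t)$ entirely in Cartesian form, in terms of $x(t)$ and $y(t)$. Since both are affine in $t$ by Equation~\eqref{eq:xy}, polynomiality and the degree bound then follow at once.

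\textbf{Step 1: explicit form of $Q_n^m$.} Integrate the explicit radial formula term by term:
\begin{equation*}
Q_n^m(r)=\int_0^r s\,R_n^{|m|}(s)\,\mathrm{d}s=\sum_{k=0}^{(n-|m|)/2}(-1)^kF(n,m,k)\,\frac{r^{\,n-2k+2}}{n-2k+2}.
\end{equation*}
The recurrences of the previous Proposition could be used instead, but the explicit expression makes the monomial structure transparent. The relevant facts are:
\begin{itemize}
\item every monomial $r^{j+2}$ that appears has $j=n-2k\in\{|m|,|m|+2,\dots,n\}$;
\item consequently $j\equiv |m|\pmod 2$;
\item there is no constant term, and the lowest power is $r^{|m|+2}$, with $|m|+2\ge 2$.
\end{itemize}

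\textbf{Step 2: divide by $r^2$.} By the last fact, $Q_n^m(r)/r^2=\sum_j c_j r^{j}$ is a genuine polynomial in $r$, with no negative powers, even when $m=0$. This is the step where the apparent singularity of $A_n^m$ at $r(t)=0$ must be ruled out. It is the main point to check, and the fact that $Q_n^m$ starts at $r^{|m|+2}$ settles it.

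\textbf{Step 3: absorb the phase.} For each term, split
\begin{equation*}
r^{j}e^{-im\theta}=r^{\,j-|m|}\,\bigl(r e^{-i\,\mathrm{sgn}(m)\theta}\bigr)^{|m|}.
\end{equation*}
Since $j-|m|$ is even, $r^{j-|m|}=(x^2+y^2)^{(j-|m|)/2}$. Moreover $re^{-i\theta}=x-iy$ and $re^{i\theta}=x+iy$. Each term is therefore $(x^2+y^2)^{(j-|m|)/2}(x\mp iy)^{|m|}$, a polynomial in $x,y$ that is homogeneous of degree $j\le n$.

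\textbf{Step 4: substitute and conclude.} Substituting the affine functions $x(t),y(t)$ from Equation~\eqref{eq:xy} gives a polynomial in $t$ of degree at most $j\le n$. Multiplying by the constant $C$ and summing over $j$ shows that $A_n^m(t)$ is a polynomial in $t$ of degree at most $n$.

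A remark worth adding: since $A_n^m$ has degree at most $n$, a Clenshaw--Curtis rule with roughly $n/2+1$ or more nodes integrates it exactly.
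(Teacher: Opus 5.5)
Your proof is correct and is essentially the paper's argument. The paper derives $Q_n^m(r)=r^{m+2}V_n^m(r^2)$ from the Jacobi form of $R_n^m$ and then uses $r^2=x^2+y^2$ and $r^m e^{-im\theta}=(x-iy)^m$; you obtain the same factorization term by term from the explicit sum, and you are slightly more careful about $m<0$ and about why dividing by $r(t)^2$ leaves no negative powers.

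Your closing remark is wrong, however. An $N_c$-point Clenshaw--Curtis rule is exact only up to degree $N_c-1$ (degree $N_c$ when $N_c$ is odd), so a degree-$n$ integrand needs about $n+1$ nodes, as the paper states. The count of roughly $n/2+1$ nodes is the Gauss--Legendre one.
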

\begin{proof}
We will focus on $m \ge 0$; the proof is exactly the same for $m <0$. Note first that the radial polynomial $R_n^m(r)$ is exactly equivalent to a shifted Jacobi polynomial.
The relation is defined as:
\begin{equation}
R_{n}^{m}(r)=(-1)^{\frac{n-m}{2}} r ^{m}P_{\frac{n-m}{2}}^{(0,m)}(1-2r ^{2}),
\label{eq:jacobi}
\end{equation}
where \(P_k^{(\alpha, \beta)}(x)\) represents the standard Jacobi polynomial of degree $k$ with parameters $\alpha$ and $\beta$.
To satisfy the polynomial definition, the indices $n$ and $m$ must obey the following rules: $n \ge m$ and $n - m$ must be an even integer, which are exactly the rules under which Zernike polynomials are defined.
We rewrite Equation~\ref{eq:jacobi} as:
\begin{equation}
R_{n}^{m}(r)=r ^{m}P_n^m(r^2),
\end{equation}
where where $P_n^m$ is a polynomial of degree $(n-m)/2$ in its argument $r^2$. Replacing in $Q_n^m(r)$,
\begin{equation*}
Q_{n}^{m}(r)=\int_0^r t^{m+1}P_n^m(t^2) dt,
\end{equation*}
and using the polynomial expansion of $P_n^m$, it is then easy to show that
\begin{equation}
Q_{n}^{m}(r)= r^{m+2} V_n^m(r^2),
\label{eq:qv}
\end{equation}
where $V_n^m$ is a polynomial of degree $(n-m)/2$ in its argument $r^2$.
Replacing Equation~\ref{eq:qv} into Equation~\ref{eq:Anm}, we get:
\begin{eqnarray}
A_{n}^{m} &=& C\frac{ Q_{n}^m(r(t)) e^{-im\theta(t)} }{r(t)^2} \nonumber \\
&=& C \frac {r(t)^{m+2} V_n^m(r(t)^2) e^{-im\theta(t)} }{r(t)^2} \nonumber \\
&=& C V_n^m(r(t)^2) r(t)^m e^{-im\theta(t)} .
\end{eqnarray}
Recall that $r(t)=\sqrt{x(t)^2 + y(t)^2}$. In addition, 
\begin{equation*}
r(t)^m e^{-im\theta(t)}  = (x(t)-i y(t))^m.
\end{equation*}
Therefore,
\begin{equation*}
A_{n}^{m}(t) = V_n^m(x(t)^2 + y(t)^2) (x(t)-i y(t))^m.
\end{equation*}
As both $x(t)$ and $y(t)$ are linear functions of $t$, and as $V_n^m$ is a polynomial of order at most $\frac{n-m}{2}$ with argument $r^2$, this concludes the proof that $A_{n}^{m}(t)$ is a polynomial in $t$. In addition, since $x(t)^2+y(t)^2$ is a quadratic polynomial in $t$,
the factor $V_n^m(x(t)^2+y(t)^2)$ has degree at most $n-m$ in $t$,
while $(x(t)-iy(t))^m$ has degree $m$.
Their product therefore has degree at most $(n-m)+m=n$.
\end{proof}

Because the transformed integrand $A_{n}^m(t)$ is a polynomial of maximum degree $n$, the integral can be evaluated \textit{exactly} using numerical quadrature. 
While Gauss--Legendre quadrature requires the theoretical minimum number of nodes for a single polynomial, we utilize the \textbf{Clenshaw-Curtis} quadrature rule for superior computational efficiency in practice. 
Although Gauss--Legendre quadrature requires fewer nodes for a fixed polynomial degree, the nested structure of Clenshaw--Curtis nodes allows previously computed function evaluations to be reused when adaptive refinement is needed. 
In practice, this advantage outweighs the slightly larger
number of quadrature points. Details will be provided in Section~\ref{sec:algo}.

\subsection{Generalization to Arbitrary 2D Meshes and to raster images}

The formulation derived in the preceding section, which transforms the 2D area integral into a 1D summation over boundary edges, is highly versatile. Because the reduction relies fundamentally on Green's theorem, the methodology is not restricted to simple, continuous analytical contours but generalizes naturally to any arbitrary 2D mesh representation.

For instance, if a 2D shape is represented by a triangular mesh or an arbitrary polygonal tessellation, the total Zernike moment of the shape is exactly evaluated as the sum of the moments computed over all oriented edges of the mesh. For a shape with uniform intensity, the line integrals along internal edges shared by adjacent geometric elements perfectly cancel each other out due to their opposing orientations. This ensures that only the true external boundaries (and the boundaries of any internal holes) contribute to the final moment, preserving both mathematical exactness and computational efficiency.

This new formalism also provides an alternate approach to computing the Zernike moments of an image. Indeed, the edge-based integration scheme developed above adapts seamlessly to digital, pixel-based images. 
A 2D raster image can be treated as a specialized uniform square mesh, where each pixel acts as a piecewise constant square domain of given intensity $I$. 
When computing the Zernike moments over this grid, the integration is performed along the horizontal and vertical edges separating the pixels.
Because the pixels in a 2D image possess a consistent topological orientation, adjacent pixels traverse their shared boundaries in opposite directions. 
Consider, for example, a horizontal edge $e$ forming the bottom boundary of a pixel $p_1$ and the top boundary of a pixel $p_2$. 
If the contour integral for $p_1$ evaluates $e$ in a positive direction, the integral for $p_2$ evaluates the exact same edge in the negative direction. 
Since the total integral over the image is the sum of the integrals over each individual pixel, these adjacent boundary evaluations are combined.

In this discrete context, the net contribution of any given edge $e$ shared by two adjacent pixels --such as $p_1$ and $p_2$-- is naturally scaled by the directional intensity gradient across that edge:
\begin{equation}
\Delta I_e = I(p_1) - I(p_2).
\end{equation}
Consequently, an edge contributes to the total Zernike moment if and only if there is a difference in intensity between its two neighboring pixels. Edges separating pixels of identical color or grayscale intensity yield $\Delta I_e = 0$ and naturally vanish from the summation.

\section{Algorithm for computing the 2D Zernike moments for an image or a 2D shape}
\label{sec:algo}

The previous section provides the elements for computing the 2D Zernike moments
of a raster image through continuous integration over the edges between the pixels of this image.
We summarize those elements in Algorithm~\ref{algo:alg1}.

\begin{algorithm}[htb] 
\caption{\emph{Edge-based Zernike moments of a raster image}}
\label{algo:alg1}
\begin{algorithmic}
\STATE \textbf{Input:}  The image with $N_x \times N_y$ pixels. $N$: The maximum order for the  2D Zernike moments. 
\STATE \textbf{Initialize:}  Set coordinate system such that the image fits in the unit circle. Define set of edges $S=\{e\}$  between pixels and set their weights $I(e)$. Initialize $Z_n^m=0$ for all $0 \le n \le N$, $0 \le m \le n$, with $n-m$ even.
\FOR {$e \in S$}
	\STATE	(1)	Define end points $A$ and $B$ for $e$, its orientation $\epsilon_{AB}$, and its weight $\Delta I_e$.
	\STATE  (2)     Compute contribution of edge $e$:
		\begin{equation*}
			Z_{n}^m(e) = \Delta I_e \epsilon_{AB} \int_{-1}^{1} A_{n}^m(t) \, \mathrm{d}t
    		\end{equation*}
		using the Clenshaw-Curtis quadrature algorithm.
	\STATE (3)       Update $Z_n^m = Z_n^m + Z_{n}^m(e)$ for all possible $n$ and $m$.
\ENDFOR
\STATE	\textbf{Output:} The Zernike moments $Z_{n}^m$ associated with the image.
\end{algorithmic}
\end{algorithm}

We note first that this algorithm can be easily expanded to analyzing 2D shapes defined by contours, usually an external boundary and possibly many inner holes.
Each contour is represented by a polygon. 
External boundaries are oriented counterclockwise, whereas internal holes are oriented clockwise. 
The edges of all the polygons define the set $S$ in the initialization step of the algorithm.

The loop over the edges of either an image or a contour is embarrassingly parallel and can therefore be distributed efficiently across multiple CPU cores or GPU threads.

There are two essential elements in the algorithm that have not yet been discussed: (i) the quadrature used to perform the continuous integrations over the edges,
and (ii) the definition of a pixel intensity for color images. These two elements are discussed below. We then end this section with a discussion on complexity. 
\subsection{Clenshaw-Curtis quadrature}

The primary advantage of Clenshaw-Curtis quadrature is that its nodes are perfectly nested when the sequence size is defined as $N_c = 2^k + 1$. This nesting allows for highly efficient adaptive integration: if the precision must be increased, the number of nodes can be doubled without discarding any of the previously evaluated points. Furthermore, when computing an entire set of moments up to $N_{max}$, the same set of coordinate evaluations $\bm{P}(t_k)$ can be universally reused for all required polynomials.

A Clenshaw-Curtis rule utilizing $N_c$ points integrates exactly any polynomial of degree up to $N_c - 1$. Therefore, choosing the number of quadrature points such that $N_c \geq n + 1$ guarantees an exact mathematical result.

The continuous integral is thus replaced by a finite sum over the optimally chosen nodes:
\begin{equation}
    Z_{n}^m(E) = \sum_{k=0}^{N_c-1} w_k F_{n}^m(t_k).
    \label{eq:z_e}
\end{equation}
The Clenshaw-Curtis nodes $t_k \in [-1, 1]$ are defined as the extrema of Chebyshev polynomials (the Chebyshev-Gauss-Lobatto nodes), given analytically by:
\begin{equation*}
    t_k = \cos\left( \frac{k \pi}{N_c - 1} \right),
\end{equation*}
and $w_k$ are the corresponding quadrature weights derived from exactly integrating the interpolating Chebyshev polynomial.

In practice, for each pre-computed Clenshaw-Curtis node $t_k$, we simply evaluate the Cartesian coordinates $\bm{P}(t_k)$ along the edge, convert them into the required polar components $(r(t_k), \theta(t_k))$, and accumulate the weighted sum simultaneously across all necessary moments.

\subsection{Complex Zernike Moments (CZM) and Quaternion Zernike Moments (QZM).}

There are 3 main types of images: binary, grayscale, and color. 
For the first two types, a pixel is defined with one channel, with values of 0 (black) or 1 (white) for a binary image, and with values of 0 to 255 for a gray scale of 255 values between black and white. For both cases, complex Zernike moments are computed, as described above.

Color images define each pixel over three channels, Red, Green, and Blue. While it is possible to consider the 3 channels separately, such an integration fails to capture the holistic correlation between RGB components. Instead, we represent a color pixel as a pure quaternion \cite{Chen:2012,Chen:2012b,  Chen:2015, Huang:2023}.:
\begin{equation}
    f(x,y) = R(x,y)i + G(x,y)j + B(x,y)k.
\end{equation}
The right-sided Quaternion Zernike Moment replaces the complex exponential with a quaternion exponential using the space diagonal axis $\mu = \frac{i+j+k}{\sqrt{3}}$:
\begin{equation}
    QZ_{n}^{m} = \frac{n+1}{\pi} \iint_D f(x,y) R_{n,m}(r) e^{-\mu m \theta} \, dx \, dy.
\end{equation}

\subsection{Computational Complexity and Accuracy Trade-offs.}

When evaluating the computational efficiency of the proposed edge-based integration against traditional pixel-based summation, a careful analysis of algorithmic complexity is required. 

Let $N$ be the maximum radial order of the Zernike moments, and let an image consist of $P$ total pixels and $E$ active boundary edges (where the intensity gradient $\Delta I \neq 0$). 

The traditional discrete pixel-based approach evaluates the spatial recurrences at the center point of each pixel. Because there are roughly $N^2/2$ moments to compute up to order $N$, the computational cost per pixel is $\mathcal{O}(N^2)$, resulting in a global complexity of $\mathcal{O}(P \cdot N^2)$. However, this is a zeroth-order geometrical approximation; point-sampling a polynomial of degree $N$ over a finite square area introduces severe discretization errors and high-frequency aliasing. 

To make the area-based pixel integration mathematically exact—matching the precision of our continuous contour formulation—one would have to apply 2D Gauss--Legendre quadrature over the area of each pixel. Exact integration of a 2D polynomial of degree $N$ requires $\mathcal{O}(N^2)$ quadrature nodes per pixel. Evaluating the $\mathcal{O}(N^2)$ moments at $\mathcal{O}(N^2)$ nodes results in an exact area integration complexity of $\mathcal{O}(P \cdot N^4)$.

By applying Green's Theorem, our formulation reduces the 2D area integral to a 1D line integral. The required number of 1D Gauss quadrature points along an edge is $N_g \propto N$. Evaluating the $\mathcal{O}(N^2)$ moments at $N_g$ points yields a complexity of $\mathcal{O}(E \cdot N^3)$. 
Therefore, the edge-based formulation reduces the mathematical dimensionality of the exact computation from $\mathcal{O}(N^4)$ down to $\mathcal{O}(N^3)$. 

It should be noted that while the theoretical complexity assumes that the number of
quadrature nodes increases linearly with the maximum order, in practice the adaptive Clenshaw--Curtis procedure typically converges with a nearly constant number of nodes.

Furthermore, for images containing contiguous shapes or objects, the number of active edges scales with the shape's perimeter, whereas the number of pixels scales with its area ($E \ll P$). Consequently, the gradient-weighted edge integration not only eliminates geometric discretization errors but, for typical structured images, provides infinite continuous precision at a vastly reduced computational burden compared to exact area summation.

\subsection{Generating an image from Zernike moments}
Given a set of complex, $Z_{n}^{m},$ or quaternion, $QZ_{n}^{m}$, Zernike moments, we reconstitute an image as follows. The size of the image is set to $N_x$ pixels along the $x$ dimension (width), and $N_y$ pixels along the $y$ dimension (height). Note that $N_x$ and $N_y$ are independent of the sizes of the image from which the Zernike moments were initially computed, allowing for resizing images. 

The center of the image is set at $C=(c_x,c_y) = (N_x/2, N_y/2)$. It is enclosed in a circle with radius $R = \sqrt{ c_x^2 + c_y^2}$.
A pixel $(i,j) \in [1, N_x]\times [1, N_y]$ is assigned the Cartesian coordinates:
\begin{eqnarray*}
x &=& \frac{(i - c_x -0.5)}{R} ,\quad y = \frac{(j - c_y -0.5)}{R} ,
\end{eqnarray*}
and the corresponding polar coordinates,
\begin{eqnarray*}
r_{x,y} &=& \sqrt{ x^2 + y^2}, \quad \theta_{x,y} = \tan^{-1} \left( \frac{y}{x} \right).
\end{eqnarray*}
The intensity $I(i,j)$ of this pixel is then set to
    \begin{equation}
        I(i,j) = \sum_{n=0}^{M} \sqrt{ \frac{n+1}{\pi} } \sum_{\substack{m=-n \\ n-|m| \text{ even}}}^{n} Z_n^m R_n^m(r_{x,y}) e^{im\theta_{x,y}}
     \label{eq:reconC}
    \end{equation}
    when the Zernike moments are complex, leading to a grayscale image, or to
\begin{equation}
        I(i,j) = \sum_{n=0}^{M} \sqrt{ \frac{n+1}{\pi} } \sum_{\substack{m=-n \\ n-|m| \text{ even}}}^{n} QZ_n^m R_n^m(r_{x,y}) e^{\mu m\theta_{x,y}}
        \label{eq:reconQ}
    \end{equation}
 when the Zernike moments are quaternion, leading to a color image. Note that in this equation $\mu = \frac{i+j+k}{\sqrt{3}}$. In both Equations~\ref{eq:reconC} and \ref{eq:reconQ}, $M$ is the reconstruction order.  As the Zernike moments are complete, the exact image is reconstructed when $M\rightarrow +\infty$.
 
 Note that computing the pixel intensities for the whole image is an embarrassingly parallel task.

\begin{figure*}[t]
\centering
\includegraphics[width=0.95\linewidth]{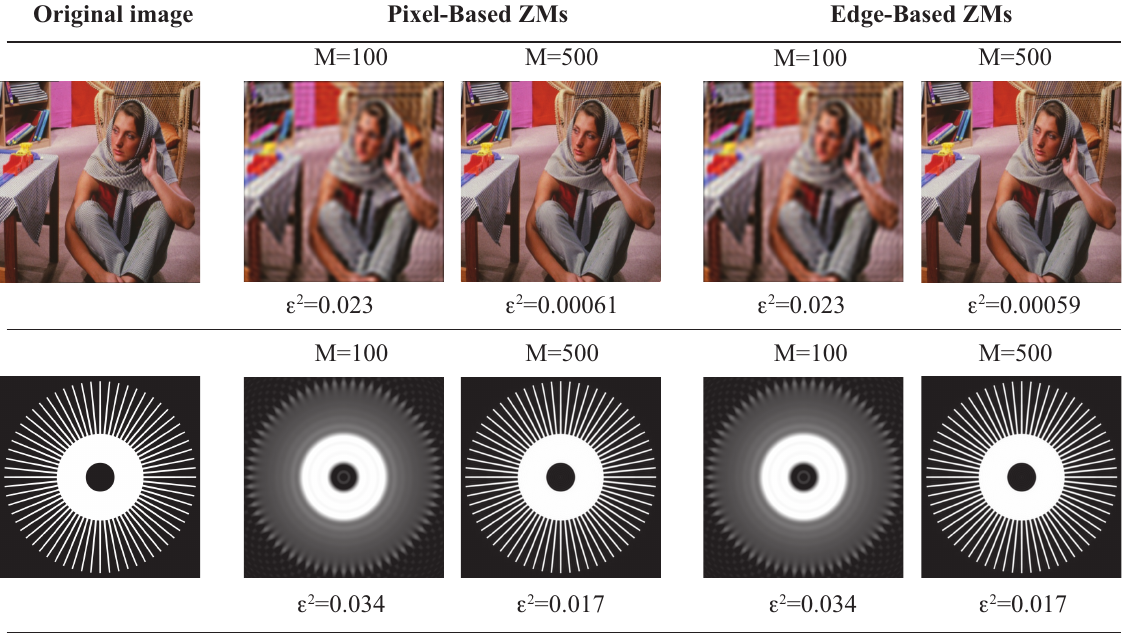}
\caption{Zernike-based reconstructed images and corresponding errors with different maximum order M of moments used.}
\label{fig:barbara}
\end{figure*}

\section{Numerical experiments}
\label{sec:exp}
The objective of the numerical experiments is twofold. First, we verify that the proposed edge-based formulation produces Zernike moments that are numerically equivalent
to those obtained with the classical pixel-based approach at moderate orders. 
Second, we demonstrate that the edge-based formulation remains stable at high orders, where spatial aliasing degrades the accuracy of the pixel-based computation.
The experiments are organized to evaluate these aspects separately. Reconstruction experiments assess the visual impact of exact integration, while shape
retrieval and classification experiments quantify the stability of high-order moments.

In all experiments, we compare the discrete, pixel-based approach briefly described in section \ref{subsec:img}, with the continuous, edge-based approach described in detail in the previous section. When appropriate, namely when the image can be captured as a set of contours, we apply the edge-based approach directly on the edges of those contours, as described above. All images are mapped inside the unit circle before the moment computation.

We use two new programs for the numerical experiments, Shape2Zernike, for computing moments, and Zernike2Shape, for regenerating an image given the Zernike moments. Both programs are written in C++. Both programs have been adapted to use the multiple cores available on the CPU on which they are run. For all experiments described below, we have used an Apple M4 Max processor with 16 cores (12 performance cores, 4 efficiency cores).

\subsection{Reconstructing color, grayscale, and binary images using pixel-based or edge-based QMs}

We considered first a color image. We used the standard Barbara image of size $(N_x, N_y)=(512, 512)$. Note that this image is drawn from A. Gersho's work and was first referenced in \cite{Shapiro:1993}. Quaternion Zernike moments were computed using both the discrete pixel-based and the continuous edge-based methods (see above) were computed up to order 500. The images reconstituded from these moments with different values of reconstruction order M are shown in
Fig. \ref{fig:barbara}. Let $I_0(i, j)$ and $I_r(i,j)$ be the (quaternion) intensities of the pixel $(i,j)$ in the  original image and in one of the reconstructed image, respectively. We compute the normalized mean square error  $\epsilon^2$ \cite{Revaud:2008} to measure the accuracy of the reconstructed images:
\begin{equation}
\epsilon^2 = \frac{ \sum_{i=1}^{N_x} \sum_{j=1}^{N_y} | I_0(i,j) - I_c(i,j)|^2} { \sum_{i=1}^{N_x} \sum_{j=1}^{N_y} | I_0(i,j)|^2}.
\label{eq:epsilon}
\end{equation}
The reconstruction errors are also given in Figure~\ref{fig:barbara}. The
results show that the reconstructed images are close to and very close to the original image for $M=100$ and $M=500$, respectively, for both the pixel-based and edge-based  Zernike moments. 
For natural color images of (relatively) high resolution, the edge-based and pixel-based methods therefore produce nearly indistinguishable reconstructions, demonstrating that the proposed method preserves the accuracy of the classical approach at moderate orders.

The second image is a $512\times 512$ black and white image of a wheel with a hole at its center, and 64 teeth at its boundary. The outer teeth radius is 0.95, the inner solid part has a radius of 0.45, and the hole has a radius of 0.15. The image was generated with the Python library Pillow \cite{Pillow}. Complex Zernike moments up to order N=500 were computed using the pixel-based and edge-based method, as described above, and images were reconstructed with $M=100$ and $M=500$. 

Unlike natural images, the gear contains a large number of sharp geometric features. Accurate reconstruction of these features requires stable estimation of high-order
moments. 
Consequently, this example provides a more stringent test of the numerical properties of the moment computation.
Results for the two methods look qualitatively and quantitatively (evaluated using $\epsilon^2$) the same, as observed in Figure~\ref{fig:barbara}. In both cases, the presence of a large number of thin teeth requires that a large number of moments be computed to generate a correct reconstruction.

\begin{figure*}[t]
\centering
\includegraphics[width=0.75\linewidth]{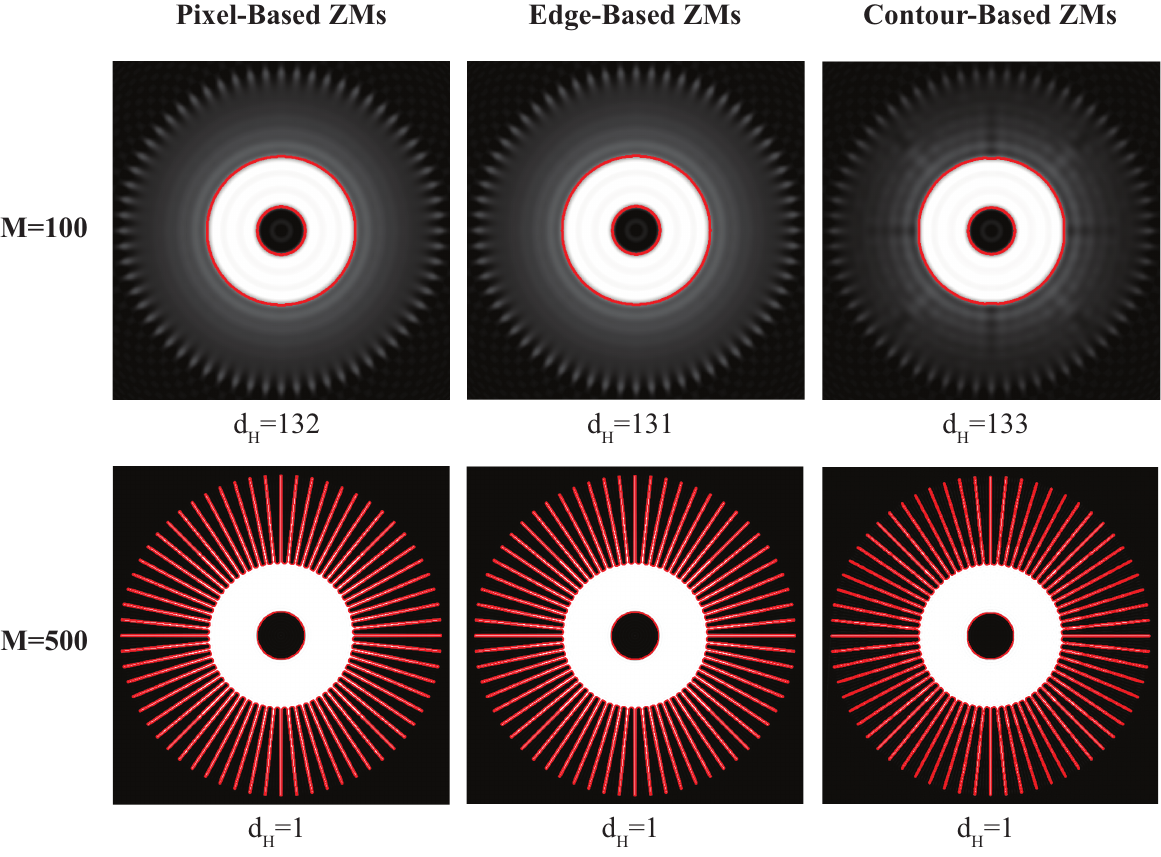}
\caption{Zernike-based reconstructed images (resolution 512x512) with the gear contours overlaid in red.  The Hausdorff distances $d_H$ between the gear contour in the original image (see Figure~\ref{fig:barbara}) and the contours in the reconstructed images are also  given. Results are shown for two levels of reconstructions, with $M=100$ and $M=500$ in the top and bottom row, respectively.}
\label{fig:contour}
\end{figure*}

The gear image in Figure~\ref{fig:barbara} is basically an annulus characterized with its outer (the teeth) and inner (the hole) boundaries, each defined as a simple polygonal curve. We used the Moore-Neighbor tracing algorithm modified by Jacob's stopping criteria as implemented in the Matlab function bwboundaries \cite{Gonzalez:2004} to identify those polygons. The outer polygon includes 15853 edges, while the inner hole is made of 285 edges.  Note that the Matlab function automatically assigns a different orientations for outer and inner boundaries. As described in the Method section, the complex Zernike moments for the gear can be computed directly from the two polygons. In Figure~\ref{fig:contour} we compare the reconstructed images based on the contour-based, pixel-based, and edge-based Zernike moments. The reconstruction errors of the contours are provided as Hausdorff distances $d_H$ between the vertices representing the outer and inner boundaries (computed using bwboundaries). Namely, for two annuli $A$ and $B$, the Hausdorff distance is given by:
\begin{equation}
d_H(A, B) = \sup\left\{\sup_{a\in A}\inf_{b\in B} d(a,b), \sup_{b\in B}\inf_{a\in A} d(a,b)\right\},
\end{equation}
where $d(a,b)$ is the Euclidean distance between $a$ and $b$. All the reconstructed images have the same resolution as the original gear image ($512\times 512$). Using Zernike moments up to $M=100$ is not enough to generate a faithful reconstruction, as the teeth of the gear are not recognized by the boundary detectors. In contrast, the quality of the reconstruction is highlighted by the fact that at $M=500$, the reconstructed and original contours are separated by 1 pixel only, for all three methods for computing the Zernike moments.

\subsection{Computing time}

We report computing time complexities as a function of $N_m$, the total number of moments. Note that $N_m$ is $\mathcal{O}(N^2)$, where $N$ is the maximum order of the Zernike moments (appreviated to ZMs from now).

The pixel based computation of ZMs  is expected to be $\mathcal{O}(N_xN_yN_m)$, where $N_x$ and $N_y$ are the width and height dimensions of the image. In contrast, and following the discussion on complexity in the previous section, the exact edge-based computation of ZMs is theoretically $\mathcal{O}(EN_m^{3/2})$, where $E$ is the number of edges. For an image, $E$ is  $\mathcal{O}(N_xN_y)$, and therefore the edge-based computation is $\mathcal{O}(N_xN_yN_m^{3/2})$. The exponent $3/2$ is associated with the fact that the exact computation of the line integral over the edge is $\mathcal{O}(N_m^{1/2})$.  Using an adaptive Clenshaw-Curtis quadrature, however, the integral may converge significantly faster.

\begin{figure}[h]
\centering
\includegraphics[width=0.95\linewidth]{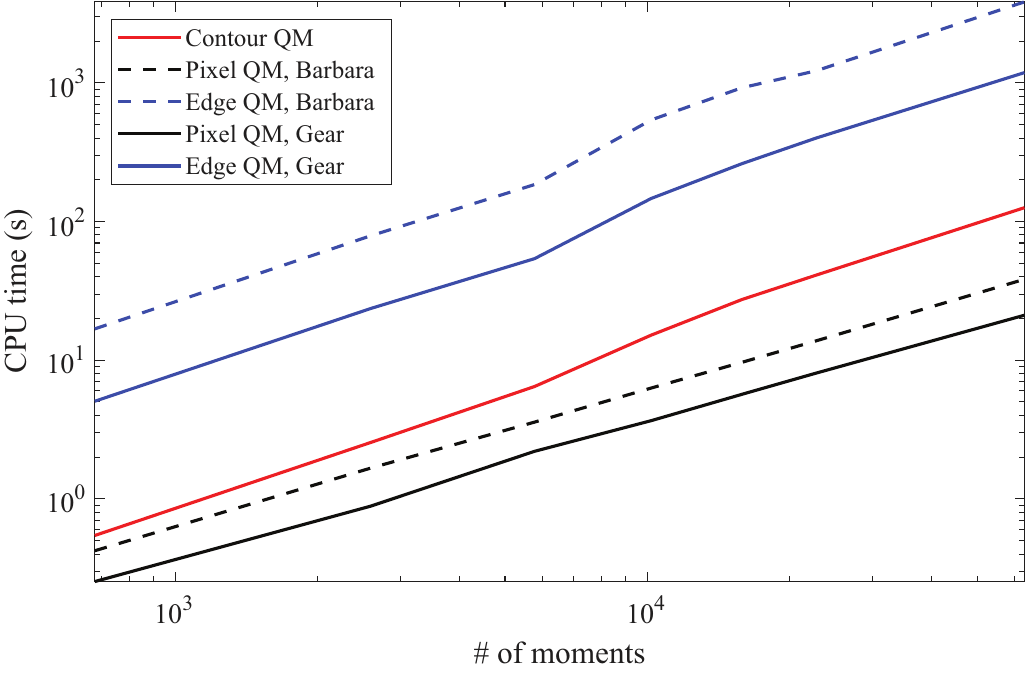}
\caption{Total CPU time for computing ZMs of either the Barbara image or the gear image as a function of the number of moments computed. Both images are 512x512. Computations are performed on 4 cores of a M4 Max processor.}
\label{fig:compute}
\end{figure}

\begin{figure*}[t]
\centering
\includegraphics[width=0.95\linewidth]{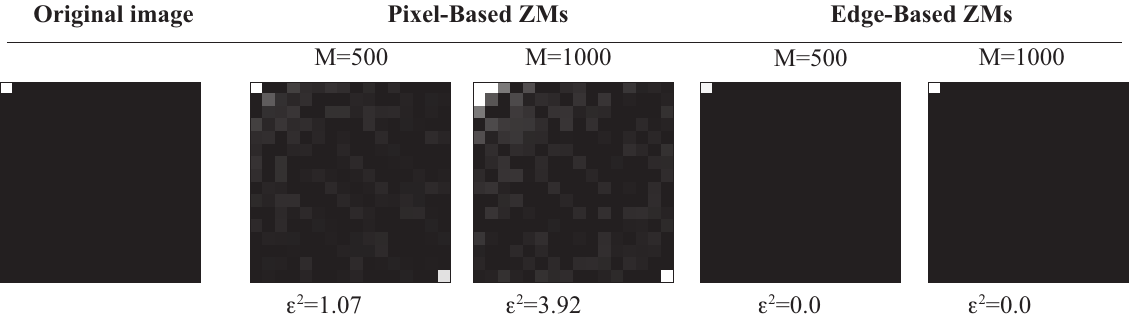}
\caption{\emph{The ghost pixel experiment}. Starting from a black picture with a single white pixel at its top left corner (left image), We compute Zernike moments up to order $M$ with two different methods, the standard pixel-based method and the edge-based method introduced in this paper. The corresponding reconstructed images with different maximum order M and their errors (computed using Equation~\ref{eq:epsilon}  are shown. Note the presence of a ghost pixel at the bottom right corner of the reconstructed images based on the pixel ZMs.}
\label{fig:onepixel}
\end{figure*}

In Figure~\ref{fig:compute}, we report the observed corresponding time complexities for two different images, the Barbara image and the gear image described above. Both input images include $(N_x,N_y)=(512,512)$ pixels.  The total CPU times are found to be linear with respect to $N_m$. As described above, this is expected for the pixel-based calculation. For the edge-based and contour-based computations, this apparent linearity is a result of the adaptive quadrature.
Although the worst-case complexity grows as $O(E N_m^{3/2})$, the adaptive quadrature procedure typically converges using only a small number of nodes (on average 17 nodes instead of the theoretical $N$ nodes required).
As a consequence, the observed runtime grows almost linearly with the number of moments.

There are, however, differences observed in Figure~\ref{fig:compute} that are worth commenting:
\begin{enumerate}
\item[i)] \emph{The pixel-based computations of ZMs are faster for the gear image than the Barbara image}. This is simply a consequence of the fact that the gear image is grayscale, requiring complex ZMs, while the Barbara image is color, for which we computed Quaternion ZMs.  Note that for complex Zernike moments, only moments for positive values of $m$ are actually computed, as $Z_n^{-m} = (Z_n^m)^{\ast}$. There are no similar simplifications for Quaternion moments, as quaternion multiplications are not symmetric.
\item[ii)] \emph{The edge-based computations of ZMs are faster for the gear image than the Barbara image}. The reason mentioned for pixel-based computation remains valid, but there is an additional factor to consider. An edge is considered if and only if there is a gradient in the intensities of the two adjacent pixels. For a sharp color image like Barbara, most edges have such a gradient. For the gear image, however, only $30\%$ of the total number of edges need to be considered.
\item[iii)] \emph{the edge-based computations are significantly slower than the pixel-based computation}. In the standard pixel-based computation, the integral is evaluated at a single point per pixel, typically its center. The edge-based computation, by contrast, relies on evaluating line integrals along the pixel boundaries. While every individual pixel is bounded by four edges, each internal edge is shared by two adjacent pixels. As a result, the grid effectively averages out to two unique edges per pixel. In addition, for an image of size 512 × 512, we observed that computing the line integral via adaptive Clenshaw-Curtis quadrature required an average of 17 evaluation points along each edge. Taking into account the effective two-to-one ratio of edges to pixels, the edge-based method requires evaluating at least 34 points per pixel.  In Figure~\ref{fig:compute}, we observe that on average, the edge computation is 37 and 74 times slower than the pixel computation for the gear image and the Barbara image, respectively.
\end{enumerate}
Finally, we note that each computation was performed using 4 cores. The corresponding wall times were approximately $25\%$ of the total CPU time, which is consistent with the fact that Algorithm \ref{algo:alg1} is trivially parallelizable. When trying 8 cores or more, we observed thermal underclocking.

\subsection{Discrete vs continuous computations of Zernike Moments}

The pixel-based method for computing Zernike moments is fundamentally discrete.
It equates each pixel with a single point at its center carrying its intensity. 
An image is then a collection of weighted Dirac functions located at the center of the pixels.
In contrast, the edge-based method considers a pixel as an actual square.
As this square is of uniform intensity, it suffices to integrate over its edges to obtain the correct values of its Zernike moments.

In the section above, we have shown that both methods compute Zernike moments that are accurate enough to generate faithful reconstructions of high resolution images.
As the computation of edge-based ZMs is significantly (one to two orders of magnitude) slower than the computation of pixel-based ZMs, it is understandable to wonder whether the former (introduced in this paper) is actually useful.
In the following, we answer this question positively, highlighting one inherent issue associated with discrete computation, namely spatial aliasing, and its consequence in the accuracy of the Zernike moments and the images they generate.

\subsubsection{The ghost pixel}

The limitations of the discrete approximation were first demonstrated using a simple 16x16 test image comprised of zero-intensity pixels, with the exception of a single white pixel (intensity 255) located at the top-left corner of the image.
As illustrated in Figure~\ref{fig:onepixel}, the image reconstructed using the pixel-based Zernike moments exhibits a distinct structural artifact - a ``ghost pixel" - at the antipodal bottom-right corner. 
This failure occurs because the Dirac delta approximation introduces severe spatial aliasing at high frequencies.
Briefly, Zernike polynomials possess strict origin symmetry. For any point at $(r, \theta)$, there is an exact opposite ``antipodal" point across the origin at $(r, \theta + \pi)$.
Even degree polynomials ($n = 0, 2, 4 \ldots$) have identical values at both sides, $V_n^m(r, \theta + \pi) = V_n^m(r, \theta)$
while odd degree polynomials ($n = 1, 3, 5 \ldots$) are perfectly inverted across the origin, $V_n^m(r, \theta + \pi) = -V_n^m(r, \theta)$.

To reconstruct a pixel at the top-left corner without adding a pixel at the bottom-right, the sum of all the odd polynomials must exactly match the sum of all the even polynomials.
When the high-frequency coefficients become corrupted by aliasing, the mathematical balance between the even and odd polynomials is destroyed. The most severe failure occurs at the point of maximum symmetry: the exact opposite corner, hence the presence of a ghost pixel.
Conversely, because the edge-based integration intrinsically suppresses frequencies smaller than the pixel width, it prevents aliasing; consequently, the ghost pixel artifact is entirely absent from the resulting reconstruction. 

\begin{figure}[t]
\centering
\includegraphics[width=0.95\linewidth]{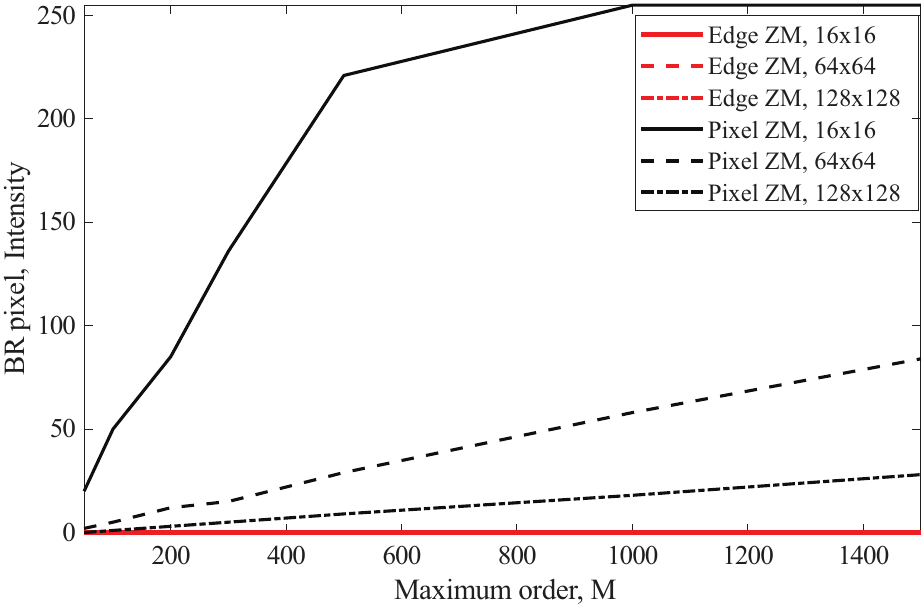}
\caption{The intensity of the bottom right (BR) pixel of the images reconstructed from Zernike moments computed with either the pixel-based or the edge-based method, as a function of the maximum order $M$ of the Zernike moments used for the reconstruction. As the images are treated as grayscale, a pixel intensity varies from 0 to 255.}
\label{fig:ghost}
\end{figure}

Figure~\ref{fig:onepixel} refers to an image of size $16 \times 16$. The ``ghost pixel" effect, however, is not limited to such low resolution images. We repeated the experiment with images of size $64\times 64$ and $256\times 256$. Those images were black with a single white pixel as the top left corner. In Figure~\ref{fig:ghost}, we report the intensity of the bottom right corner of the images reconstructed from Zernike moments computed with the pixel-based ZMs and edge-based ZMs. 
Consistent with the results from the $16\times 16$ image, there are no ghost pixels associated with the edge-based ZMs, while the intensities of the bottom right pixels in the pixel-based reconstructed images increase when the maximum order $M$ of reconstruction increases.

\begin{figure*}[t]
\centering
\includegraphics[width=0.7\linewidth]{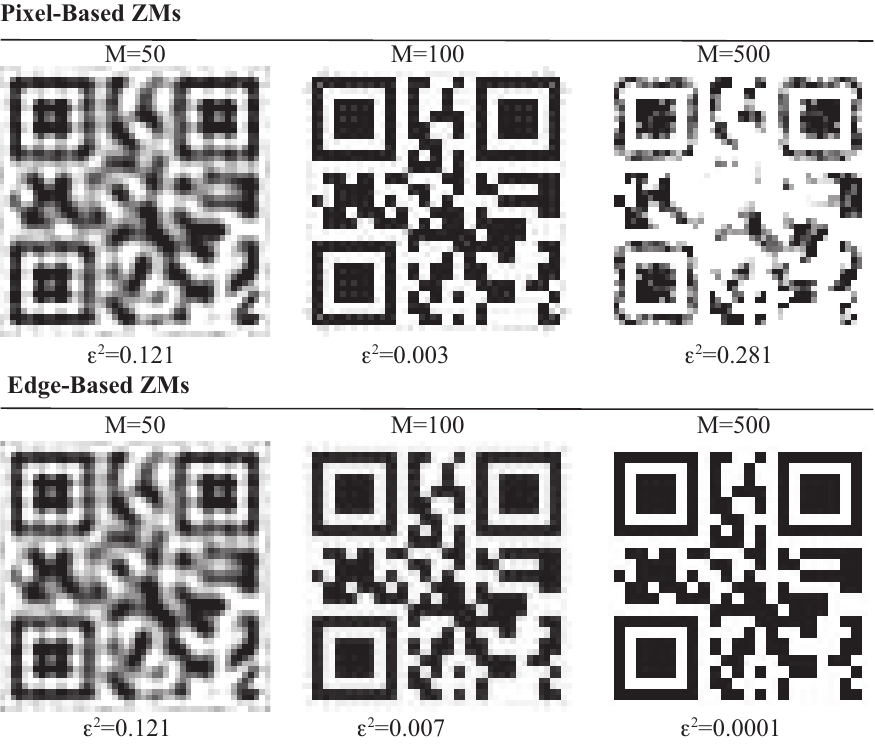}
\caption{Reconstructed images of the QR code representing the word ``Zernike" from pixel-based (top) and edge-based (bottom) complex Zernike moments, for different reconstruction orders $M$.  The reconstruction qualities, quantified by $\epsilon^2$ (Equation~\ref{eq:epsilon}) are provided below each image. Note that all those QR codes are recognized as ``Zernike", with the exception of the pixel-based reconstructed image with $M=500$, which is not identified as a QR code.}
\label{fig:QR}
\end{figure*}

\subsubsection{Corrupted QR codes}

Spatial aliasing may lead to bigger concerns than a ghost pixel for an image containing a single white pixel. 
Consider a QR code: spatial aliasing will lead to degradation of its reconstruction from Zernike moments for high frequencies, i.e., large order $M$. 
We illustrate this behavior in Figure~\ref{fig:QR}. A $48\times 48$ image of the QR code representing the word ``Zernike" was generated using the library qrcode from Python. Pixel-based, and edge-based complex Zernike moments were generated from this image, up to order $N=500$. 
The images reconstructed from those Zernike moments at order $M=50, 100$, and $500$ are shown in Figure~\ref{fig:QR}. 
For both types of moments, the reconstructed image at $M=50$ is somewhat blurry, but still recognizable as the QR code ``Zernike". 
At $M=100$, the pixel-based and the edge-based reconstructed  images look significantly sharper, with good $\epsilon^2$ values. 
The quality of the reconstruction, however, changes drastically for a higher reconstruction order $M=500$. 
While the quality of the reconstruction keeps improving for the edge-based Zernike moments, the pixel-based reconstruction is degraded, leading to an image that is no more recognized as a QR code. This degradation is the result of spatial aliasing at high frequencies.

\subsubsection{Spatial aliasing problems in image classification}

\begin{figure*}[t]
\centering
\includegraphics[width=0.9\linewidth]{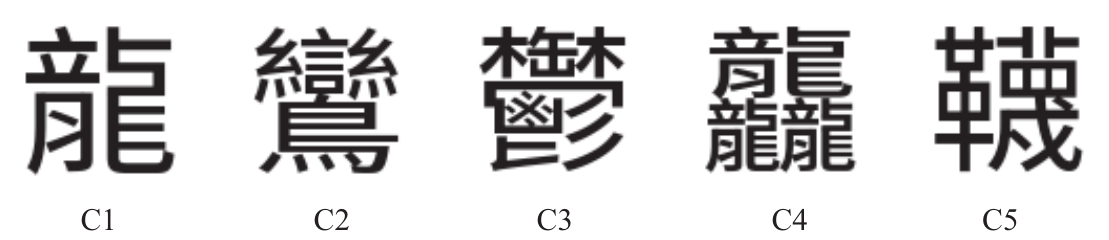}
\caption{The five Chinese characters, which we refer to as C1, C2, C3, C4, and C5. These characters were not selected for their semantic meanings (these are traditional, often archaic ideograms), but because of their complexities (C2 and C4 for example are among the most stroke-heavy characters in Chinese).}
\label{fig:chinese}
\end{figure*}

The two subsections above highlighted image reconstruction problems associated with spatial aliasing inherent to computing Zernike moments using a point-based representation of an image. Here we show that this problem extends to other applications of Zernike moments, more specifically image classification. We compare classifications obtained with pixel-based, and edge-based Zernike moments.

We considered a set of 5 Chinese characters, illustrated in Figure~\ref{fig:chinese}.
We generated 50 samples for each of those characters. 
A sample is a grayscale image of size $48\times 48$, representing a single character.
It is the result of a random rotation (in the range $[-15^{\circ}, +15^{\circ}]$) followed by a 2D affine transformation that randomly scales (in the range $[0.9, 1.1]$) and shears (with a shear factor in the range $[-0.08, 0.08]$) of that character.
Complex Zernike moments are computed for each sample, using either the pixel-based, or edge-based method. 
As the Zernike moment $Z_n^m$ is not invariant to rotation, translation, or scale, we convert it to a Zernike Invariant (ZI) \cite{Khotanzad:1990}, $I_n^m$ using:
\begin{equation*}
I_n^m = \frac{|Z_n^m|}{|Z_0^0|},
\end{equation*}
where $|\cdot|$ refers to the modulus. Note that $Z_n^m$ is a complex number while $I_n^m$ is a real number.
The distance $d(i,j)$ between two samples $i$ and $j$ is then computed as the Euclidean norm between their Zernike invariants:
\begin{equation*}
d(i,j) = \sqrt{\sum_{n=0}^{M} \sum_{m} (I_n^m(i) - I_n^m(j))^2},
\end{equation*}
where $m \in [0,n]$ with the condition that $n-m$ is even and $M$ is the maximum order considered. We only consider the positive values of $m$ as for complex Zernike moments, $I_n^{-m}=I_n^m$.

\begin{figure*}
        \centering
        \includegraphics[width=0.75 \linewidth]{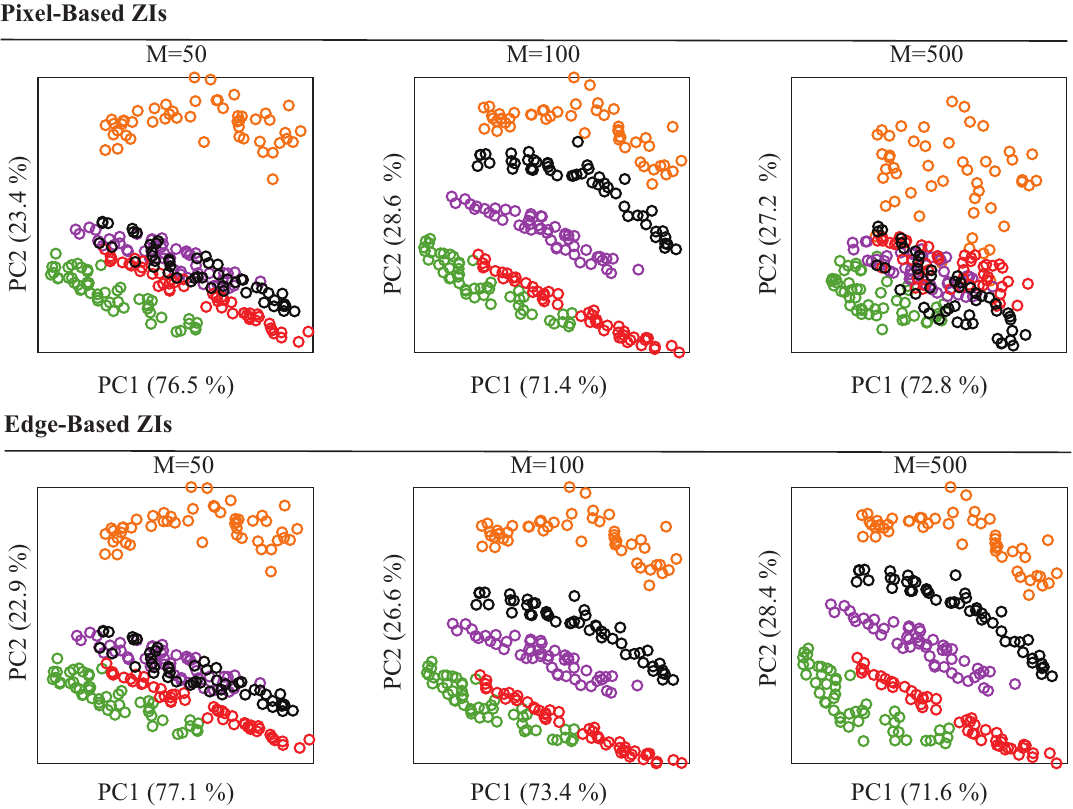}
        \caption{\emph{The MDS embeddings of the samples of the 5 Chinese characters shown in Figure~\ref{fig:chinese}}. The distances between the samples are defined as the distances between their Zernike invariants,  computed either based on a discrete pixel-based representation of the images (top), or on a continuous edge-based representation of the images (bottom), with different maximum orders, M. Characters C1, C2, C3, C4, and C5 are represented in orange, green, purple, red, and black, respectively. The explained variance of each principal component is provided in parenthesis.}
        \label{fig:mds}
    \end{figure*}
    
We computed the distance matrices over the 250 samples (50 samples $\times$ 5 characters) for the pixel-based ZIs and the edge-based ZIs, with maximum orders M=50, 100, 200, 300, and 500. We projected the information contained in these matrices on a two dimensional space using multidimensional scaling (MDS \cite{Gower:1966, Borg:2005}). Results are shown in Figure~\ref{fig:mds}. 
The Zernike invariants perform well in distinguishing the 5 Chinese characters. For the edge-based Zernike invariants, character C1 (in orange) and to some extent character C2 (in green) are well separated from the other characters for M = 50. The five characters appear to be fully separated for M=100, and this result remains stable for M=500. For the pixel-based Zernike invariants, results for M=50 and M=100 are very similar to the corresponding results for the edge-based Zernike invariants. However, results for M=500 show that the Zernike invariants are not stable at high orders, leading to poor separation of the characters.

Figure~\ref{fig:mds}  provides an illustration of the impact of spatial aliasing on classification successes of Zernike invariants; we still need to quantify how important the loss of accuracy is for the pixel-based Zernike invariants . We used two complementary approaches for that purpose, namely a receiver operating characteristic (ROC) analysis \cite{Metz:1978} and a classification test.

The five Chinese characters in Figure~\ref{fig:chinese} serve as the standard. A pair of samples is defined as similar, or ``positive", if they correspond to the same character, and ``negative”"otherwise. All pairs of samples are compared using one of the ZI distances defined above. For varying thresholds of this measure, all pairs below the threshold are assumed positive, and all above it are negative. The pairs that agree with the standard are called true positives (TP), while those that do not are false positives (FP). ROC analysis compares the rate of TP as a function of the rate of FP; it is scored with the area below the corresponding curve, AUC \cite{Hanley:1982}.

We extended the ROC analysis to the problem of classification. Classification accuracy was estimated directly from the distance matrix using repeated random hold-out validation. For each repetition, samples from each class were randomly divided into equally sized reference and test sets. A test sample was assigned to the class whose reference samples exhibited the smallest average distance to that sample. Classification performance was summarized by the confusion matrix and by the overall accuracy, defined as the proportion of correctly classified test samples. Reported accuracies are averaged over 10000 independent random partitions.

Table \ref{table:class} summarizes the results of the classification and ROC analyses for both pixel-based and edge-based Zernike invariants. For both representations, the classification accuracy and the AUC increase steadily as the maximum order of the invariants increases from $M=10$ to $M=100$. This behavior is expected, as higher-order invariants capture progressively finer details of the character shapes and therefore provide a more discriminative representation of the images.

Beyond $M=100$, however, the two approaches exhibit markedly different behavior. For the edge-based invariants, both the classification accuracy and the AUC remain essentially constant, reaching values of $97\%$ and $0.84$, respectively. This indicates that increasing the maximum order beyond $100$ neither improves nor degrades the discriminative power of the descriptors, suggesting that the edge-based computation remains numerically stable at high orders.

In contrast, the performance of the pixel-based invariants decreases for $M>100$. The classification accuracy drops from $96.9\%$ at $M=100$ to $90.1\%$ at $M=500$, while the AUC decreases from $0.83$ to $0.78$. Since higher-order invariants should not reduce the amount of shape information available, this loss of performance is most likely the result of numerical errors in the computation of the pixel-based invariants, associated with spatial aliasing. These errors accumulate at high orders and progressively obscure the discriminatory information contained in the descriptors. This interpretation is consistent with the MDS projections shown in Figure~\ref{fig:mds}, where the separation between character classes deteriorates for the pixel-based invariants at large values of $M$, whereas it remains essentially unchanged for the edge-based invariants.

\begin{table*}[t]
	\begin{threeparttable}
        \caption{Classifying images of Chinese characters using Zernike invariants}
        \label{table:class}
                \begin{tabular}{l l c c c c c c c c c}
                \cline{3-11}
 & &\multicolumn{5}{c}{Maximum order M} \cr
\hline
  Invariants & Test & 10 & 20 & 30 & 40 & 50 & 100 & 200 & 300 & 500\\
\hline
Pixel \tnote{a} & Accuracy \tnote{c} & 70.5 (4.7) & 82.5 (4.2) & 86.8 (4.0) & 94.2 (2.9) & 94.2 (2.7)  & 96.9 (2.0)  & 95.6 (2.5) & 91.1 (3.2) & 90.1 (3.2) \\
Pixel & AUC \tnote{d}  & 0.70 & 0.74 & 0.75 & 0.80 & 0.81 & 0.83 & 0.82 & 0.79  & 0.78 \\
Edge  \tnote{b} & Accuracy & 70.5 (4.7) & 82.6 (4.2) & 86.6 (4.0) & 93.9 (3.0) & 94.0 (2.8) & 96.0 (2.2) & 97.1 (2.0) & 97.1 (2.1) & 97.3 (2.0) \\
Edge & AUC & 0.70 & 0.74 & 0.76 & 0.80 & 0.81 & 0.83 & 0.84 & 0.84  & 0.84 \\
\hline
 \end{tabular}
 \begin{tablenotes}
 \item [a)] {\small Pixel-based Zernike invariants.}
  \item [b)] {\small Edge-based Zernike invariants.}
   \item [c)] {\small Accuracy of classification experiments, in $[0, 100\%]$, averaged over 10000 cross-validation experiments; the standard deviation is provided in parenthesis;  the higher, the better.}
    \item [d)] {\small Area under the curve for ROC experiments, in $[0, 1]$; the higher, the better.}
 \end{tablenotes}
 \end{threeparttable}
\end{table*}

\section{Conclusion}
\label{sec:conclusion}
This paper introduces a new edge-based formulation for the computation of two-dimensional Zernike moments and invariants from digital images. 
Unlike conventional pixel-based approaches, which approximate the continuous image integrals by discrete sums over pixel centers, the proposed method represents image boundaries as polygonal contours and evaluates the corresponding moment integrals exactly along their edges. 
The resulting formulation eliminates the spatial discretization errors inherent to pixel-based methods while preserving the desirable properties of Zernike moments, including orthogonality, compactness, and rotational invariance.

A key contribution of this work is the demonstration that the edge contribution to a Zernike moment can be transformed into the integral of a polynomial function. 
This property allows the use of Clenshaw--Curtis quadrature as an exact integration scheme rather than as a numerical approximation. 
Consequently, the computation of the moments is no longer limited by the spatial aliasing effects that affect traditional pixel-based formulations, particularly at high orders.

The proposed framework applies equally well to binary, grayscale, and color images. For grayscale and color images, the image is first decomposed into a collection of polygonal regions associated with constant intensity values, after which the moment computation reduces to the evaluation of contour integrals along the corresponding region boundaries. 
This formulation generalizes to any 2D shape defined from its boundaries, both internal and external.

The numerical experiments demonstrate that the edge-based formulation reproduces the results of the classical pixel-based approach at low and moderate orders while exhibiting substantially improved stability at high orders. 
Reconstruction experiments confirm that the proposed method preserves image fidelity, whereas classification experiments on a dataset of Chinese characters show that high-order edge-based invariants retain and even improve their discriminative power even when the corresponding pixel-based invariants begin to deteriorate. 
In particular, for a maximum order of 500, the edge-based invariants achieve a classification accuracy of 97.3\%, compared with 90.1\% for the pixel-based invariants, highlighting the practical impact of eliminating discretization errors from the moment computation.

Several extensions of this work may be considered. 
The edge-based integration framework could be generalized to other families of orthogonal moments, including Legendre, Chebyshev, and Fourier--Mellin descriptors. The independence of edge contributions also makes the method particularly well suited to parallel implementations on modern multicore and GPU architectures. 
Finally, the extension of the present approach to three-dimensional Zernike moments computed from polygonal surface meshes represents a promising direction for future research.

\section*{Code availability}
\label{sec:code}
The C++ source codes and the scripts to generate the datasets used in this study will be made publicly available on GitHub upon publication.

\section*{Acknowledgments}

This work was initiated while P.K. was an international visitor of the Sydney
Mathematical Research Institute (SMRI), University of Sydney, Australia. He is grateful for their financial support and hospitality during his stay.


{
\appendix[Proofs of the recurrences for the radial polynomials $R_n^m(r)$ and their integrals.]
\label{sec:Appendix}
Many recurrences have been proposed for the Zernike radial polynomial functions $R_n^m(r)$ (see for example \cite{Kintner:1976, Noll:1976, Prata:1989, Chong:2003, Shakibaei:2013, Deng:2016, Andersen:2018}), sometimes adapted to specific software libraries and/or to special processor architecture \cite{Qin:2012, Suhng:2024, Elmacioglu:2025}. The purpose of these recurrences is to improve stability for larger order $n$, as well as computing speed. The recurrences we propose here are not novel (except the one associated with computing $Q_n^m(r)$). We do provide full proofs of all three recurrences in Proposition 1 from the main body of this paper.

The real-valued radial polynomial $R_n^m(r)$ is defined by:
\begin{equation}
R_n^{|m|}(r) = \sum_{k=0}^{(n-|m|)/2} (-1)^k F(n,m,k) r^{\,n-2k},
\end{equation}
with
\begin{equation*}
F(n,m,k) = \frac{(n-k)!}{k!\left(\frac{n+|m|}{2}-k\right)!\left(\frac{n-|m|}{2}-k\right)!},
\end{equation*}
where $n$ is a non-negative integer, $m$ is an integer satisfying $n-m$ is even and $|m| \le n$. 
For the sake of clarity, we will assume $m \ge 0$ and remove the absolute values. There are many ways to rewrite this polynomial. For example, we can rewrite the ratios of factorials as products of binomials,
\begin{equation}
R_n^{m}(r)
= \sum_{k=0}^{(n-m)/2} (-1)^k \binom{n-k}{k} \binom{n-2k} {\frac{n-m}{2}-k} r^{n-2k},
\end{equation}
revealing that the coefficients in the polynomial expansion of $R_n^m(r)$ are integers. To reveal useful recurrences, it is noticed that $R_n^m(r)$ are special cases of Jacobi polynomials $P_{n}^{(\alpha, \beta)}$:
\begin{equation}
R_n^{m}(r) = (-1)^{(n-m)/2} r^m P_{(n-m)/2}^{(m,0)} (1-2r^2),
\end{equation}
or can be expressed as an integral of a Chebyshev polynomial of the second type $U_n(x)$ \cite{Janssen:2007}:
\begin{equation}
R_n^{m}(r) = \frac{1}{2\pi} \int_{0}^{2\pi} U_n(r \cos(\theta)) \cos(m\theta) d\theta,
\end{equation}
or can be expressed as an integral of Bessel functions of the first kind \cite{Noll:1976}:
\begin{equation}
R_n^{m}(r) = (-1)^{(n-m)/2} \int_{0}^{+\infty} J_{n+1}(x) J_{m}(rx) dx.
\label{eqn:b0}
\end{equation}
We will use the latter, as it gives us access to two fundamental recurrences:
\begin{eqnarray}
J_{n-1}(x) + J_{n+1}(x) &=& \frac{2n}{x} J_n(x) \label{eqn:b1}, \\
J_{n-1}(x) - J_{n+1}(x) &=& 2 J'_n(x) \label{eqn:b2}.
\end{eqnarray}

We will assume $n > 0$, $1 \le m \le n-2$ and $n-m$ even. To improve readability, we define:
\begin{eqnarray*}
A_n^m = (-1)^{(n-m)/2}.
\end{eqnarray*}.

Using Equation~\ref{eqn:b2}, $R_n^m(r)$ can be rewritten as:
\begin{eqnarray*}
R_n^{m}(r) &=& A_n^m \int_{0}^{+\infty} J_{n+1}(x) J_{m}(rx) dx \\
&=& A_n^m \int_{0}^{+\infty} \left(J_{n-1}(x) - 2 J'_n(x) \right) J_{m}(rx) dx \\
&=& - R_{n-2}^m(r) - 2 A_n^m \int_{0}^{+\infty} J'_n(x) J_{m}(rx) dx.
\end{eqnarray*}
After integration by parts of the second term on the right of this equation:
\begin{eqnarray*}
R_n^{m}(r) &=& - R_{n-2}^m(r) - 2 (A_n^m \left[ J_n(x) J_m(rx) \right]_{0}^{+\infty} \\
&& + 2 A_n^m \int_{0}^{+\infty} J_n(x) \frac{\partial}{\partial x}(J_{m}(rx)) dx \\
&=& - R_{n-2}^m(r) + 2 A_n^m \int_{0}^{+\infty} J_n(x) J'_{m}(rx) r dx.
\end{eqnarray*}
We used the fact that $J_n(0)=0$ and $\lim_{x\rightarrow +\infty} J_n(x) = 0$ for $n >0$. Now we use Equation~\ref{eqn:b2} again on the derivative $2 J'_m(rx) = J_{m-1}(rx) - J_{m+1}(rx)$:
\begin{eqnarray}
R_n^{m}(r) &=& - R_{n-2}^m(r) + A_n^m r \int_{0}^{+\infty} J_n(x) J_{m-1}(rx) dx \nonumber \\ 
&& - A_n^m r \int_{0}^{+\infty} J_n(x) J_{m+1}(rx) dx \nonumber \\
&=& - R_{n-2}^m(r) + r (R_{n-1}^{m-1}(r) + R_{n-1}^{m+1}(r)),
\label{eqn:a2eq3}
\end{eqnarray}
which concludes the proof of the recurrence of Equation~\ref{eqn:rec1} in the main text.

To prove Equation~\ref{eqn:rec2}, we start again with Equation~\ref{eqn:b0} and differentiate both sides with respect to $r$. Noting that $\frac{\partial}{\partial r} J_m(rx) = x J'_m(rx)$:
\begin{eqnarray*}
\frac{dR_n^{m}}{dr}(r) &=& A_n^m \int_{0}^{+\infty} J_{n+1}(x) x J'_{m}(rx) dx.
\end{eqnarray*}
Based on Equation~\ref{eqn:b1}, $xJ_{n+1}(x) = 2nJ_n(x) - xJ_{n-1}(x)$. Therefore,
\begin{eqnarray*}
\frac{dR_n^{m}}{dr}(r) &=& -A_n^m \int_{0}^{+\infty} J_{n-1}(x) x J'_{m}(rx) dx   \\
&& +  2n A_n^m \int_{0}^{+\infty} J_{n}(x) J'_{m}(rx) dx.
\end{eqnarray*}
Because the first integral internally reconstructs the $r$-derivative, we have:
\begin{eqnarray}
\frac{dR_n^{m}}{dr}(r) &=& \frac{dR_{n-2}^{m}}{dr}(r) + 2n A_n^m \int_{0}^{+\infty} J_{n}(x) J'_{m}(rx) dx. \nonumber \\
\label{eqn:a2eq4}
\end{eqnarray}
Using Equation~\ref{eqn:b2}:
\begin{equation}
2 J'_{m}(rx) = J_{m-1}(rx) - J_{m+1}(rx).
\label{eqn:a2eq5}
\end{equation}
Replacing Equation~\ref{eqn:a2eq5} into Equation~\ref{eqn:a2eq4}, we get:
\begin{eqnarray}
\frac{dR_n^{m}}{dr}(r) &=& \frac{dR_{n-2}^{m}}{dr}(r) - n A_n^m \int_{0}^{+\infty} J_{n}(x) J_{m-1}(rx) dx \nonumber \\
&& + nA_n^m \int_{0}^{+\infty} J_{n}(x) J_{m+1}(rx) dx \nonumber \\
&=& \frac{dR_{n-2}^{m}}{dr}(r) + n \left( R_{n-1}^{m-1}(r) + R_{n-1}^{m+1}(r) \right).
\label{eqn:a2eq6}
\end{eqnarray}
Integrating Equation~\ref{eqn:a2eq6} over $r$ from $0$ to $r$, we get:
\begin{eqnarray*}
R_n^m(r) = R_{n-2}^m(r) + n \left( S_{n-1}^{m-1}(r) + S_{n-1}^{m+1}(r) \right).
\end{eqnarray*}
Shifting $n\rightarrow n+1$ and $m \rightarrow m+1$, we get:
\begin{eqnarray}
R_{n+1}^{m+1}(r) = R_{n-1}^{m+1}(r) + (n+1) \left( S_{n}^{m}(r) + S_{n}^{m+2}(r) \right),
\label{eqn:a2eq7}
\end{eqnarray}
which concludes the proof of Equation~\ref{eqn:rec2} (after isolating $S_n^m$ and shifting $m \to m-2$). A similar proof was given by Noll \cite{Noll:1976}.

Finally, shifting $n\rightarrow n+1$ and $m \rightarrow m+1$ in Equation~\ref{eqn:a2eq3}, we get:
\begin{eqnarray*}
R_{n+1}^{m+1}(r) = - R_{n-1}^{m+1}(r) + r \left( R_{n}^{m}(r) + R_{n}^{m+2}(r) \right).
\end{eqnarray*}
After integration over $r$, recognizing that $\int_0^r R_n^m(t) dt = S_n^m(r)$ and $\int_0^r t R_n^m(t) dt = Q_n^m(r)$, we get:
\begin{eqnarray*}
S_{n+1}^{m+1}(r) = -S_{n-1}^{m+1}(r) + Q_{n}^{m}(r) + Q_{n}^{m+2}(r),
\end{eqnarray*}
which concludes the proof of Equation~\ref{eqn:rec3} (after shifting $m \to m-2$).
}


\bibliographystyle{IEEEtran}
\bibliography{IEEEabrv, zernike}

@String{PSFB =            {Proteins: Struct.\ Func.\ Bioinfo.}}

@String{BJ =              {Biophys.\ J.}}

@article{Kihara:2009,
	author={V. Venkatraman and L. Sael and D. Kihara},
	title={Potential for protein surface shape analysis using spherical harmonics and {3D Z}ernike descriptors},
	journal={Cell.\ Biochem.\ Biophys.},
	volume={54},
	pages={23--32},
	year={2009},
}

@article{Teague:1980,
	author={M. Teague},
	title={Image analysis via the general theory of moments},
	journal={J.\ Opt.\ Soc.\ Amer.},
	volume={70},
	pages={920--930},
	year={1980},
}

@article{Hu:1962,
	author={M. Hu},
	title={Visual pattern recognition by moment invariants},
	journal={{IRE} Trans. Infor.\ Theory},
	volume={8},
	pages={179--187},
	year={1962},
}

@inproceedings{Novotni:2003,
	author={M. Novotni and R. Klein},
	title={{3D Z}ernike descriptors for content based shape retrieval},
	booktitle={Proc.\ {ACM} symposium on solid and physical modeling},
	pages={216--225},
	year={2003},
}

@article{Novotni:2004,
	author={M. Novotni and R. Klein},
	title={Shape retrieval using {3D Zernike} descriptors},
	journal={Computer Aided Design},
	volume={36},
	pages={1047--1062},
	year={2004},
}

@article{Venkatraman:2009,
author = {Venkatraman,V. and Yang,Y. and Sael,L. and Kihara,D.},
title = {Protein-protein docking using region-based {3D Zernike} descriptors},
journal = {BMC Bioinformatics},
year = {2009},
volume = {10},
pages = {407},
}

@article{Sael:2008,
  author = {Lee Sael and Bin Li and David La and Yi Fang and Karthik Ramani and Raif Rustamov and Daisuke Kihara},
  title = {Fast protein tertiary structure retrieval based on global surface shape similarity},
  journal = PSFB,
  year = {2008},
  volume = {72},
  pages = {1259--1273},
}

@article{Zernike:1934,
	author = {F. Zernike},
	title = {Beugungstheorie des {S}chneidenver-fahrens und seiner verbesserten {F}orm, der {P}hasenkontrastmethode},
	journal = {Physica},
	volume = {1},
	number = {7},
	pages = {689--704},
	year = {1934},
}

@article{Kintner:1976,
	title={On the Mathematical Properties of the {Zernike} Polynomials},
	author={Kintner, Eric C},
	journal={Optica Acta},
	volume={23},
	number={8},
	pages={679--680},
	year={1976}
}

@article{Prata:1989,
	title={Algorithm for computation of {Zernike} polynomials expansion coefficients},
	author={Prata, Aluizio and Rusch, WVT},
	journal={Applied Optics},
	volume={28},
	pages={749--754},
	year={1989},
}

@article{Noll:1976,
	title={Zernike polynomials and atmospheric turbulence},
	author={Noll, Robert J},
	journal={Journal of the Optical Society of America},
	volume={66},
	pages={207--211},
	year={1976},
}

@article{Chen:2015,
	title={Color image analysis by quaternion-type moments},
	author={Chen, Beijing and Shu, Huazhong and Coatrieux, Gouenou and Chen, Gang and Sun, Xingming and Coatrieux, Jean Louis},
	journal={Journal of mathematical imaging and vision},
	volume={51},
	pages={124--144},
	year={2015},
}

@article{Chong:2003,
	title={A comparative analysis of algorithms for fast computation of {Z}ernike moments},
	author={Chong, Chee-Way and Raveendran, P and Mukundan, Ramakrishnan},
	journal={Pattern Recognition},
	volume={36},
	pages={731--742},
	year={2003},
}

@article{Deng:2016,
	title={Stable, fast computation of high--order {Zernike} moments using a recursive method},
	author={Deng, An-Wen and Wei, Chia-Hung and Gwo, Chih-Ying},
	journal={Pattern Recognition},
	volume={56},
	pages={16--25},
	year={2016},
}

@article{Elmacioglu:2025,
	title={{ZERNIPAX}: A fast and accurate {Zernike} polynomial calculator in {Python}},
	author={Elmacioglu, Yigit Gunsur and Conlin, Rory and Dudt, Daniel W and Panici, Dario and Kolemen, Egemen},
	journal={Applied Mathematics and Computation},
	volume={505},
	pages={129534},
	year={2025},
}

@inproceedings{Suhng:2024,
	title={Research on Fast {Zernike} Moment Computation in a Multiple-GPU Environment},
	author={Suhng, ByuhngMunn and Lee, Wangheon},
	booktitle={2024 24th International Conference on Control, Automation and Systems (ICCAS)},
	pages={459--464},
	year={2024},
	organization={IEEE}
}

@article{Qin:2012,
	title={A parallel recurrence method for the fast computation of {Zernike} moments},
	author={Qin, Huafeng and Qin, Lan and Xue, Lian and Yu, Chengbo},
	journal={Applied Mathematics and Computation},
	volume={219},
	pages={1549--1561},
	year={2012},
}

@article{Andersen:2018,
	title={Efficient and robust recurrence relations for the {Zernike} circle polynomials and their derivatives in {Cartesian} coordinates},
	author={Andersen, Torben B},
	journal={Optics Express},
	volume={26},
	pages={18878--18896},
	year={2018},
}

@book{Born:1999,
	author    = {Born, Max and Wolf, Emil},
	title     = {Principles of Optics},
	edition   = {7th},
	publisher = {Cambridge University Press},
	address   = {Cambridge},
	year      = {1999},
	doi       = {10.1017/CBO9781139644181}
}

@article{Mahajan:1994,
	author    = {Mahajan, Virendra N.},
	title     = {Zernike Circle Polynomials and Optical Aberrations of Systems with Circular Pupils},
	journal   = {Applied Optics},
	volume    = {33},
	pages     = {8121--8124},
	year      = {1994},
	doi       = {10.1364/AO.33.008121}
}

@article{Thibos:2002,
	author    = {Thibos, Larry N. and Applegate, Raymond A. and Schwiegerling, James T.
               and Webb, Robert},
	title     = {Standards for Reporting the Optical Aberrations of Eyes},
	journal   = {Journal of Refractive Surgery},
	volume    = {18},
	number    = {5},
	pages     = {S652--S660},
	year      = {2002},
	doi       = {10.3928/1081-597X-20020901-30}
}

@article{Khotanzad:1990,
	author    = {Khotanzad, Alireza and Hong, Yaw Hua},
	title     = {Invariant Image Recognition by {Zernike} Moments},
	journal   = {{IEEE} Transactions on Pattern Analysis and Machine Intelligence},
	volume    = {12},
	number    = {5},
	pages     = {489--497},
	year      = {1990},
	doi       = {10.1109/34.55109}
}

@article{Kan:2002,
	author = {Chao Kan and Mandyam D. Srinath},
	title = {Invariant character recognition with {Zernike} and orthogonal {Fourier--Mellin} moments},
	journal = {Pattern Recognition},
	volume = {35},
	pages = {143-154},
	year = {2002},
}

@article{Tahmasbi:2011,
	author    = {Tahmasbi, Amir and Saki, Fatemeh and Shokouhi, Shahriar Baradaran},
	title     = {Classification of Benign and Malignant Masses Based on {Zernike} Moments},
	journal   = {Computers in Biology and Medicine},
	volume    = {41},
	number    = {8},
	pages     = {726--735},
	year      = {2011},
	doi       = {10.1016/j.compbiomed.2011.06.009}
}

@article{Mudigonda:2000,
	author    = {Mudigonda, Nanda R. and Rangayyan, Rangaraj M. and Desautels, J. E. Leo},
	title     = {Detection of Breast Masses in Mammograms by Density Slicing and
               Texture Flow-Field Analysis},
	journal   = {IEEE Transactions on Medical Imaging},
	volume    = {20},
	number    = {12},
	pages     = {1215--1227},
	year      = {2001},
	doi       = {10.1109/42.974917}
}

@article{Banach:2024,
	author    = {Mateusz Banach},
	title     = {Structural Outlier Detection and {Zernike--Canterakis} Moments for Molecular Surface Meshes---Fast Implementation in {Python}},
	journal   = {Molecules},
	volume    = {29},
	pages     = {52},
	year      = {2024},
	doi       = {10.3390/molecules29010052},
}

@article{Boland:2001,
	author    = {Boland, Michael V. and Murphy, Robert F.},
	title     = {A Neural Network Classifier Capable of Recognizing the Patterns of All
               Major Subcellular Structures in Fluorescence Microscope Images of {HeLa} Cells},
	journal   = {Bioinformatics},
	volume    = {17},
	number    = {12},
	pages     = {1213--1223},
	year      = {2001},
	doi       = {10.1093/bioinformatics/17.12.1213}
}

@article{Alizadeh:2016,
	title={Measuring systematic changes in invasive cancer cell shape using {Zernike} moments},
	author={Alizadeh, Elaheh and Lyons, Samanthe Merrick and Castle, Jordan Marie and Prasad, Ashok},
	journal={Integrative Biology},
	volume={8},
	pages={1183--1193},
	year={2016},
	publisher={Oxford University Press}
}

@article{Lakshminarayanan:2011,
	author    = {Lakshminarayanan, Vasudevan and Fleck, Andre},
	title     = {{Zernike} Polynomials: {A} Guide},
	journal   = {Journal of Modern Optics},
	volume    = {58},
	number    = {7},
	pages     = {545--561},
	year      = {2011},
	doi       = {10.1080/09500340.2011.554896}
}

@article{Janssen:2007,
	author    = {Janssen, Augustus J. E. M. and Dirksen, Peter},
	title     = {Computing {Zernike} Polynomials of Arbitrary Degree Using the
               Discrete {Fourier} Transform},
	journal   = {Journal of the European Optical Society -- Rapid Publications},
	volume    = {2},
	pages     = {07012},
	year      = {2007},
	doi       = {10.2971/jeos.2007.07012}
}

@article{Shakibaei:2013,
	author    = {Shakibaei Asli, Barmak Honarvar and Paramesran, Raveendran},
	title     = {Recursive Formula to Compute {Zernike} Radial Polynomials},
	journal   = {Optics Letters},
	volume    = {38},
	pages     = {2487--2489},
	year      = {2013},
	doi       = {10.1364/OL.38.002487}
}

@inproceedings{Liao:1998a,
	author    = {Liao, Simon X. and Pawlak, Miroslaw}, 
	title     = {A Study of {Zernike} Moment Computing},
	editor    = {Chin, Roland and Pong, Ting-Chuen},
	booktitle = {Computer Vision --- ACCV'98},
	year      = {1998},
	publisher = {Springer Berlin Heidelberg},
	address   = {Berlin, Heidelberg},
	pages     = {394--401},
}

@article{Kaur:2018,
	author    = {Parminder Kaur and Husanbir Singh Pannu},
	title     = {Comprehensive Review of Continuous and Discrete Orthogonal Moments in Biometrics},
	journal   = {International Journal of Computer Mathematics: Computer Systems Theory},
	volume    = {3},
	pages     = {1--38},
	year      = {2018},
	publisher = {Taylor \& Francis},
	doi       = {10.1080/23799927.2018.1457080}
}

@article{Qi:2021,
	author    = {Shuren Qi and Yushu Zhang and Chao Wang and Jiantao Zhou and Xiaochun Cao},
	title     = {A Survey of Orthogonal Moments for Image Representation: Theory, Implementation, and Evaluation},
	journal   = {{ACM} Computing Surveys},
	volume    = {55},
	pages     = {1--35},
	year      = {2021},
	doi       = {10.1145/3479428}
}

@article{Cui:2025,
	title={Application of {Z}ernike moments for the quantitative analysis of polycyclic aromatic hydrocarbons based on fluorescence three-dimensional spectra},
	author={Cui, Yao-yao and Wu, Shao-zhe and Cui, Can and Wu, Han-bing and Li, Jin-yi and Wang, Tian},
	journal={Analytical Methods},
	volume={17},
	pages={9638--9648},
	year={2025},
}

@article{Singh:2011,
	title={Face recognition using {Z}ernike and complex {Z}ernike moment features},
	author={Singh, Chandan and Mittal, Neerja and Walia, Ekta},
	journal={Pattern Recognition and Image Analysis},
	volume={21},
	pages={71--81},
	year={2011},
}

@article{Soekarta:2025,
	title={Recent Facial Image Preprocessing Techniques: A Review},
	author={Soekarta, Rendra and Ku-Mahamud, Ku Ruhana},
	journal={Engineering Proceedings},
	volume={84},
	pages={39},
	year={2025},
}

@article{Niu:2022,
	title={Zernike polynomials and their applications},
	author={Niu, Kuo and Tian, Chao},
	journal={Journal of Optics},
	volume={24},
	pages={123001},
	year={2022},
	publisher={IOP Publishing}
}

@incollection{Bagherian:2023,
	title={Brain tumor segmentation based on {Z}ernike moments, enhanced ant lion optimization, and convolutional neural network in {MRI} images},
	author={Bagherian Kasgari, Abbas and Ranjbarzadeh, Ramin and Caputo, Annalina and Baseri Saadi, Soroush and Bendechache, Malika},
	booktitle={Metaheuristics and optimization in computer and electrical engineering: Volume 2: Hybrid and improved algorithms},
	pages={345--366},
	year={2023},
	publisher={Springer}
}

@article{Kim:2003,
	title={Invariant image watermark using {Z}ernike moments},
	author={Kim, Hyung Shin and Lee, Heung-Kyu},
	journal={{IEEE} transactions on Circuits and Systems for Video Technology},
	volume={13},
	pages={766--775},
	year={2003},
}

@article{Chen:2023,
	title={A fast method for robust video watermarking based on {Z}ernike moments},
	author={Chen, Shiyi and Malik, Asad and Zhang, Xinpeng and Feng, Guorui and Wu, Hanzhou},
	journal={{IEEE} Transactions on Circuits and Systems for Video Technology},
	volume={33},
	pages={7342--7353},
	year={2023},
}

@article{Zhang:2025,
	title={An improved {SURF} and modified {Z}ernike moments descriptor for object recognition},
	author={Zhang, Lei and Pu, Jiexin and Chen, Gui and Song, Xiaoli},
	journal={Electronics},
	volume={14},
	pages={1025},
	year={2025},
}

@article{Meng:2025,
	title={Protecting the Copyright of Intelligent Transportation Systems Based on {Zernike} Moments},
	author={Meng, Jiale and Lu, Zhe-Ming},
	journal={{IEEE} Transactions on Intelligent Transportation Systems},
	year={2025},
}

@article{Liao:1998b,
	title={On the accuracy of {Zernike} moments for image analysis},
	author={Liao, Simon X and Pawlak, Miroslaw},
	journal={{IEEE} transactions on pattern analysis and machine intelligence},
	volume={20},
	pages={1358--1364},
	year={1998},
}

@article{Huang:2023,
	title={Review of quaternion-based color image processing methods},
	author={Huang, Chaoyan and Li, Juncheng and Gao, Guangwei},
	journal={Mathematics},
	volume={11},
	pages={2056},
	year={2023},
}

@article{Chen:2012,
	title={Quaternion {Zernike} moments and their invariants for color image analysis and object recognition},
	author={Chen, BJ and Shu, HZ and Zhang, Hui and Chen, Gang and Toumoulin, Christine and Dillenseger, Jean-Louis and Luo, Limin M},
	journal={Signal processing},
	volume={92},
	pages={308--318},
	year={2012},
}

@article{Chen:2012b,
	title={Color face recognition using quaternion representation of color image},
	author={Chen, Bei-Jing and Sun, Xing-Ming and Wang, Ding-Cheng and Zhao, Xiao-Ping},
	journal={Acta Autom.\ Sin},
	volume={38},
	pages={1815--1823},
	year={2012}
}

@article{Shapiro:1993,
	title={Embedded image coding using zerotrees of wavelet coefficients},
	author={Shapiro, Jerome M},
	journal={{IEEE} Transactions on signal processing},
	volume={41},
	pages={3445--3462},
	year={1993},
}

@article{Revaud:2008,
	title={Improving {Zernike} moments comparison for optimal similarity and rotation angle retrieval},
	author={Revaud, J{\'e}r{\^o}me and Lavou{\'e}, Guillaume and Baskurt, Atilla},
	journal={{IEEE} transactions on pattern analysis and machine intelligence},
	volume={31},
	number={4},
	pages={627--636},
	year={2008},
}

@software{Pillow,
	author       = {Jeffrey Alex Clark and Contributors},
	title        = {Pillow: A modern fork of {PIL}},
	month        = oct,
	year         = 2015,
	doi          = {10.5281/zenodo.596518},
	url          = {https://github.com/python-pillow/Pillow}
}

@book{Gonzalez:2004,
	title = {Digital image processing with {MATLAB}},
	author = {Gonzalez, Rafael C. and Woods, Richard E. and Eddins, S. L.},
	publisher = {Prentice Hall},
	address = {Upper Saddle River, N.J.},
	year = {2004},
}

@article{Gower:1966,
	author  = {James C. Gower},
	title   = {Some Distance Properties of Latent Root and Vector Methods Used in Multivariate Analysis},
	journal = {Biometrika},
	volume  = {53},
	pages   = {325--338},
	year    = {1966}
}

@book{Borg:2005,
	author    = {Ingwer Borg and Patrick J. F. Groenen},
	title     = {Modern Multidimensional Scaling: Theory and Applications},
	edition   = {2},
	publisher = {Springer},
	address   ={New York, NY},
	year      = {2005}
}

@article{Metz:1978,
	author  = {Metz, Charles E.},
	title   = {Basic Principles of {ROC} Analysis},
	journal = {Seminars in Nuclear Medicine},
	year     = {1978},
	volume   = {8},
	number   = {4},
	pages    = {283--298},
	doi      = {10.1016/S0001-2998(78)80014-2}
}

@article{Hanley:1982,
	author  = {Hanley, James A. and McNeil, Barbara J.},
	title   = {The Meaning and Use of the Area under a Receiver Operating Characteristic ({ROC}) Curve},
	journal = {Radiology},
	year     = {1982},
	volume   = {143},
	number   = {1},
	pages    = {29--36},
	doi      = {10.1148/radiology.143.1.7063747}
}

@article{Montero:2021,
	author={Barrag\'{a}n-Montero, A. and Javaid, U. and Vald\'{e}s, G. and Nguyen, D. and Desbordes, P. and Macq, B. and Willems, S. and Vandewinckele, L. and Holmstr\"{o}m, M. and L\"{o}fman, F. and Michiels, S.}, 
	title={Artificial intelligence and machine learning for medical imaging: A technology review},
	journal={Physica Medica}, 
	volume={83}, 
	pages={242--256},
	year={2021},
}

@article{Castiglioni:2021,
	author={Castiglioni, I. and Rundo, L. and Codari, M. and Di Leo, G. and Salvatore, C. and Interlenghi, M. and Gallivanone, F. and Cozzi, A. and D'Amico, N.C. and Sardanelli, F.}, 
	title={{AI} applications to medical images: From machine learning to deep learning}, 
	journal={Physica medica}, 
	volume={83}, 
	pages={9--24},
	year={2021},
}

@article{Trigka:2025,
	author={Trigka, M. and Dritsas, E.},
	title={A comprehensive survey of deep learning approaches in image processing},
	journal={Sensors}, 
	volume={25}, 
	pages={531},
	year={2025}, 
}

@article{LeCun:2015,
	author    = {LeCun, Yann and Bengio, Yoshua and Hinton, Geoffrey},
	title     = {Deep learning},
	journal   = {Nature},
	volume    = {521},
	number    = {7553},
	pages     = {436--444},
	year      = {2015},
	doi       = {10.1038/nature14539}
}

@incollection{Krizhevsky:2012,
	author    = {Krizhevsky, Alex and Sutskever, Ilya and Hinton, Geoffrey E.},
	title     = {{ImageNet} Classification with Deep Convolutional Neural Networks},
	booktitle = {Advances in Neural Information Processing Systems 25},
	editor    = {Pereira, F. and Burges, C. J. C. and Bottou, L. and Weinberger, K. Q.},
	pages     = {1097--1105},
	publisher = {Curran Associates, Inc.},
	year      = {2012}
}

@article{Jiao:2019,
	author={Jiao, L. and Zhao, J.}, 
	title={A survey on the new generation of deep learning in image processing}, 
	journal={{IEEE} Access}, 
	volume={7}, 
	pages={172231--172263},
	year={2019},
}

\section{Biography Section}

\begin{IEEEbiography}[{\includegraphics[width=1in,height=1.25in,clip,keepaspectratio]{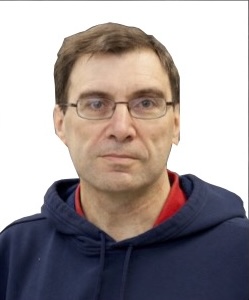}}]{Patrice Koehl}
was trained as an engineer at the Ecole Centrale de Paris,
France, where he graduated with a master degree in Bioengineering in 1984.
After two years working as an intern at the University of California,
Berkeley, he decided to pursue a PhD in Molecular Biology at the University
Louis Pasteur, Strasbourg, France, which he completed in 1989.
He joined the French National Center for Research (CNRS) the same year as a
senior scientist.  In 1997, he visited the department of Structural Biology at Stanford University;
he extended his stay over seven years, becoming a senior research associate in that department.
In 2004, he joined the University of California, Davis, where he is currently Distinguished Professor of Computer Science.
He is a recipient of the Bronze medal of the CNRS and an Alfred P. Sloan fellow.
\end{IEEEbiography}

\begin{IEEEbiography}[{\includegraphics[width=1in,height=1.25in,clip,keepaspectratio]{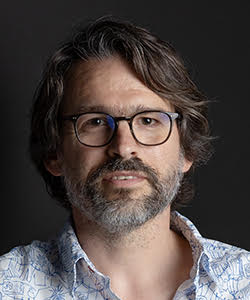}}]{Stephan Tillmann} studied mathematics and philosophy in Mainz (Germany) and Melbourne (Australia). After graduating with a doctoral thesis in low-dimensional topology from the University of Melbourne, he spent three years each in Montr\'eal, Melbourne and Brisbane.  He joined the University of Sydney in 2011 and currently holds the positions of Professor of Geometric Topology and Executive Director of the Sydney Mathematical Research Institute. For his work he has been awarded a Future Fellowship by the Australian Research Council and the Friedrich Wilhelm Bessel Research Award from the Alexander von Humboldt Foundation in 2021.
\end{IEEEbiography}

\end{document}